\newtheorem{theorem}{Theorem}
\newtheorem{LE}[theorem]{Lemma}
\newtheorem{OB}[theorem]{\bf Observation}
\newtheorem{Claim}[theorem]{Claim}
\newtheorem*{DEF}{Definition}
\newcounter{claim_nb}[theorem]
\newtheorem*{claim*}{Claim}
\newcommand{\zT}{\mathcal T}
\newcommand{\zX}{\mathcal X}
\newcommand{\ignore}[1]{}
\newenvironment{cproof}
{\begin{proof}
 [Proof.]
 \vspace{-1.5\parsep}
}
{ \end{proof}}
\title{Immersions in highly edge connected graphs}
\author{
D\'aniel Marx\thanks{Institute for Computer Science and Control, Hungarian Academy of Sciences (MTA SZTAKI), Budapest, Hungary
	\texttt{dmarx@cs.bme.hu}.
Research partially supported by the European Research Council (ERC)  grant 
``PARAMTIGHT: Parameterized complexity and the search for tight
complexity results,'' reference 280152 and OTKA grant NK105645.
}
 \and
Paul Wollan\thanks{Department of Computer Science, University of Rome, ``La Sapienza", Rome, Italy \texttt{wollan@di.uniroma1.it}.  Partially supported by the European Research Council under the European Unions Seventh Framework Programme (FP7/2007-2013)/ERC Grant Agreement no. 279558}
}
\begin{document}
\maketitle
%
\begin{abstract}
We consider the problem of how much edge connectivity is necessary to force a graph $G$ to contain a fixed graph $H$ as an immersion.  We show that if the maximum degree in $H$ is $\Delta$, then all the examples of $\Delta$-edge connected graphs which do not contain $H$ as a weak immersion must have a tree-like decomposition called a tree-cut decomposition of bounded width.  If we consider strong immersions, then it is easy to see that there are arbitrarily highly edge connected graphs which do not contain a fixed clique $K_t$ as a strong immersion.  We give a structure theorem which roughly characterizes those highly edge connected graphs which do not contain $K_t$ as a strong immersion.
\end{abstract}
%
\section{Introduction}

We consider graphs with parallel edges but no loops.  In this article, we will examine the immersion relation between graphs.
\begin{DEF}
A graph $G$ admits an \emph{immersion} of a graph $H$ if there exists a function $\pi$ with domain $V(H) \cup E(H)$ mapping to the set of connected subgraphs of $G$ which satisfies the following: 
\begin{itemize}
\item[a.] for all $v \in V(H)$, $\pi(v)$ is a vertex of $G$, and if $u \in V(H)$, $u \neq v$, then $\pi(u) \neq \pi(v)$;
\item[b.] for every edge $f \in E(H)$ with endpoints $x$ and $y$, $\pi(f)$ is a path with endpoints equal to $\pi(x)$ and $\pi(y)$;
\item[c.] for edges $f, f' \in E(H)$, $f \neq f'$, $\pi(f)$ and $\pi(f')$ have no edge in common.
\end{itemize}
The vertices $\{\pi(x): x \in V(H)\}$ are the \emph{branch vertices} of the immersion.  We will also say that $G$ immerses $H$ or alternatively that $G$ contains $H$ as an immersion.  The edge-disjoint paths $\pi(f)$ for $f\in E(H)$ are the \emph{composite paths} of the immersion.
\end{DEF}
One can distinguish between \emph{strong} and \emph{weak} immersions.  The definition given above is that of a weak immersion; in a strong immersion, one additionally requires that no branch vertex be contained as an internal vertex of a composite path.  We will consider both forms of immersions in this article.  In the interest of brevity, we will typically refer to weak immersions as simply ``immersions", and explicitly specify strong immersions whenever we are focusing on strong immersions.

There is an easy structure theorem for graphs excluding a fixed $H$ as an immersion \cite{W1}, \cite{DMMS}.  If we fix the graph $H$ and let $\Delta$ be the maximum degree of a vertex in $H$, then one obvious obstruction to a graph $G$ containing $H$ as an immersion is if every vertex of $G$ has degree less than $\Delta$.  The structure theorem shows that this is approximately the only obstruction.  The structure theorem says that for all $t \ge 1$, any graph which does not have an immersion of $K_t$ can be decomposed into a tree-like structure of pieces with at most $t$ vertices each of degree at least $t^2$.  

The value $t^2$ cannot be significantly improved.  Consider the graph $P_{k, n}$ to be the graph obtained from taking a path on $n$ vertices and adding $k-1$ additional parallel edges to each edge.  See Figure \ref{fig1}.  
\begin{figure}[htb]\label{fig1}
\begin{center}
\includegraphics[scale = .5]{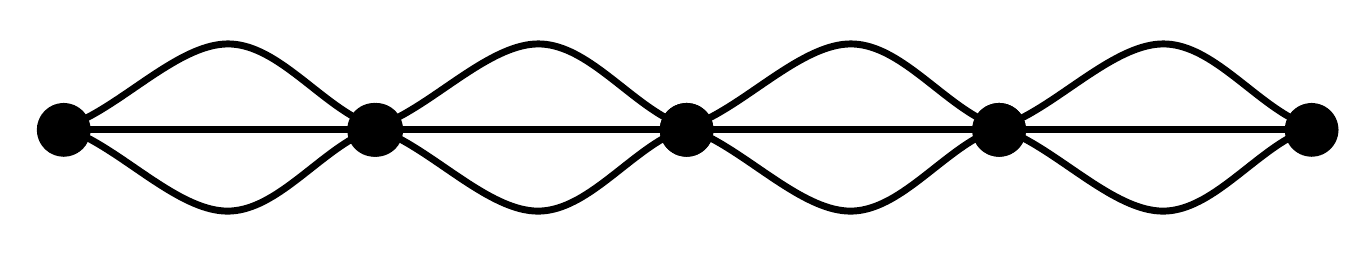}
 \caption{The graph $P_{3,4}$.}
 \end{center}
\end{figure}
The graphs $P_{k, n}$ are natural examples of graphs which are highly edge connected and exclude a given immersion or strong immersion.  Observe that the graph $P_{t^2/4-1, n}$ is roughly $t^2/4$-edge connected, but does not contain $K_t$ as an immersion.  

The structure theorem for weak immersions gives rise to a variant of tree decompositions based on edge cuts instead of vertex cuts, called \emph{tree-cut decompositions}.  The minimum width of a tree-cut decomposition is the tree-cut width of a graph.  We give the exact definition of these notions in the following section.   The example above of a highly edge connected graph with no $K_t$ immersion has tree-cut width bounded by a function of $t$.  Thus, one might hope that all the highly edge connected graphs which do not admit $K_t$ as an immersion similarly have bounded tree-cut width.  This is the main result of this article. 

\begin{theorem}\label{thm:main}
There exists a function $g$ satisfying the following.  Let $k \ge 4, n \ge 1$ be positive integers.  Then for all graphs $H$ with maximum degree $k$ on $n$ vertices and for all $k$-edge connected graphs $G$, either $G$ admits an immersion of $H$, or $G$ has tree-cut width at most $g(k,n)$.
\end{theorem}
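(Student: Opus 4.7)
The plan is to argue contrapositively: assume $G$ is $k$-edge connected and has tree-cut width exceeding a suitable bound $g(k,n)$, and construct an immersion of $H$ in $G$. The approach is the immersion/tree-cut analogue of the Robertson--Seymour excluded-minor program: large tree-cut width should force a large, highly structured ``canvas'' inside $G$, into which $H$ can be embedded using the extra freedom provided by $k$-edge connectivity.

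First, I would establish (or invoke) a duality theorem for tree-cut width in the spirit of the grid-minor theorem for treewidth. Concretely, the goal is to show that if the tree-cut width of $G$ exceeds some $W=W(k,n)$, then $G$ contains a large \emph{wall immersion}, with the wall size $N$ growing with $W$. A wall is a natural edge-based obstruction to small tree-cut width in the same way that a grid minor is the natural vertex-based obstruction to small treewidth; the example $P_{k,n}$ displayed in Figure~\ref{fig1} already shows that such a wall-like structure is essentially the only obstruction. This duality theorem is the first main engine of the proof.

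Second, given such a wall immersion of the right size, I would embed $H$ as follows. Choose $n$ candidate branch vertices $b_1,\ldots,b_n$ among the branch vertices of the wall, spaced far enough apart that their local wall-neighborhoods are pairwise disjoint. At each $b_i$, Menger's theorem combined with $k$-edge connectivity of $G$ yields $k \ge \deg_H(v_i)$ pairwise edge-disjoint paths from $b_i$ into the rest of the wall, which serve as the initial segments of the composite paths of the immersion. Their global continuations are routed through rows and columns of the wall using a standard edge-disjoint linkage argument, which succeeds provided the wall is large enough relative to $|E(H)|\le nk/2$.

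I expect the principal obstacle to be the coupling of the local high-degree attachments at each $b_i$ with the global routing through the wall. A wall has bounded local degree, so the $k$ launches at a single $b_i$ necessarily use edges of $G$ outside the wall and must ultimately be merged into distinct wall paths without reusing edges. Handling this requires a careful ``funnel'' construction at each branch vertex, together with a simultaneous edge-disjoint linkage across the wall that respects the prescribed multiplicities of $H$. Coordinating these funnels across all $n$ branch vertices, and absorbing the blow-ups from the wall-immersion duality theorem and from repeated Menger-style rerouting, is the genuine technical core, and forces the function $g(k,n)$ to be chosen quite generously.
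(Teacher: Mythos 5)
There is a genuine gap, and it sits exactly at the two places you flag as "the first main engine" and "the genuine technical core." First, the duality you want to "establish or invoke" -- large tree-cut width forces a large wall immersion -- is not available as a black box and is not how the paper proceeds; the known direction is the converse (bounded tree-cut width excludes large wall immersions), and a wall immersion by itself is in any case far too weak a canvas for what follows. The paper's actual route is: reduce to bounded degree (consolidating pairs joined by many parallel edges via Observations \ref{obs1} and \ref{obs2}, then bounding neighborhoods via Lemma \ref{lem:bdeddeg}), so that large tree-cut width forces large tree-width by Lemma \ref{lem:bdedtw}, and then invoke Theorem \ref{thm:CDKS} to get a huge \emph{clique minor in the line graph}. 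That last step uses the $4$-edge-connectivity of $G$ essentially -- a large wall is subcubic, its line graph has no large clique minor, and this is precisely why a wall cannot serve as the routing structure. Your proposal never confronts the unbounded-degree/parallel-edge issue either, without which the passage from tree-cut width to any tree-width-like parameter already fails.

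Second, the "standard edge-disjoint linkage argument through rows and columns of the wall" does not exist for the problem you need to solve. Building an immersion of $H$ requires choosing $n$ branch vertices, each emitting $k\ge 4$ edge-disjoint paths that leave any subcubic wall immediately, and then realizing an \emph{arbitrary prescribed pairing} of these $nk$ path-ends by pairwise edge-disjoint paths. High edge-connectivity does not give edge-disjoint linkages with prescribed patterns (this is exactly why $P_{k,n}$ defeats strong immersions), and Menger at each $b_i$ only gives $k$ paths to a single target, with no control over collisions across different $b_i$. The paper resolves this with heavy machinery that your funnels cannot replace: the clique minor in $L(G)$ induces a tangle; Theorem \ref{thm:imp2} (built on Robertson--Seymour's Theorem \ref{thm:RStangles}) either produces a large union of pairwise disjoint $k$-stars that is \emph{free} with respect to that tangle, or a small hitting set; in the free case Theorem \ref{thm:RS2} (Graph Minors IX) supplies the vertex-disjoint paths in $L(G)$, i.e.\ the edge-disjoint composite paths in $G$; and in the hitting-set case Lemma \ref{lem:nokspidertangle} yields a partition into pieces with small edge cuts that contradicts $k$-edge-connectivity by a counting argument. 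Your sketch has no mechanism for ruling out the scenario in which every high-degree configuration is separated from the bulk of the graph by a small edge cut, and no substitute for the linkage theorem; these are the contentful parts of the proof of Theorem \ref{thm:main3}, not absorbable technicalities.
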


The $k = 4$ case of Theorem \ref{thm:main} was proven by Chudnovsky, Dvorak, Klimosova, and Seymour \cite{CDKS};  our proof builds on this result to show the statement for general $k\ge 4$.

The theorem is not true for $k = 3$.  This is because if $G$ and $H$ are 3-regular graphs, then $G$ contains $H$ as an immersion if and only if $G$ contains $H$ as a topological minor.  Thus, if $H$ is any $3$-regular graph which cannot be embedded in the plane, then any 3-regular planar graph $G$ cannot contain $H$ as an immersion and such graphs can have arbitrarily large tree-width. 

Recent work has proven several special cases of Theorem \ref{thm:main} while generalizing the statement to strong immersions.  Giannopolous, Kaminski, and Thilikos \cite{GKT} have shown that for $k \ge 4$, every $k$-edge connected graph embedded in a surface of bounded genus either contains a fixed $H$ of maximum degree $k$ as a strong immersion, or has tree-width bounded by a function of $H$ and the genus of the surface.  Dvorak and Klimosova showed \cite{DK} that the $k=4$ case of Theorem \ref{thm:main} holds for strong immersions as well when the graph $G$ is assumed to be simple.  It is unclear whether Theorem \ref{thm:main} might also be true in general for strong immersions in simple graphs. 

Theorem \ref{thm:main} can also be contrasted with recent work of Norine and Thomas on clique minors in large $k$-connected graphs.  They have announced a proof that for every $k \ge 5$, every sufficiently large $k$-connected graph $G$ either contains $K_k$ as a minor or there exists a set $X\subseteq V(G)$ with $|X| = k-5$ such that $G-X$ is planar.  Here we see similarly that every $k$-edge connected graph which does not admit an immersion of $K_{k+1}$ falls into a relatively simple class of graphs (although the proofs here are dramatically easier than the proof of Norine and Thomas' result).  Note that Theorem \ref{thm:main} cannot be improved further to simply bound the size of the graph $G$.  Again consider that the graph $P_{k,n}$ is $k$-edge connected and can be chosen to have arbitrarily many vertices, but will not admit an immersion of $K_{k+1}$.  

If we instead consider the strong immersion relation, then it has recently been shown that in simple graphs with no parallel edges, any graph with minimum degree 200t contains a strong immersion of $K_t$ by DeVos, Dvorak, Fox, McDonald, Mohar, and Scheide \cite{DDFMMS}.  This bound is clearly best possible, up to improvements in the constant 200.  However, if we consider graphs which do possibly have parallel edges, then there exist arbitrarily highly edge connected graphs which do not contain a fixed strong immersion.  For example, for all fixed $t$, the graphs $P_{k, n}$ are $k$-edge connected and do not contain a strong immersion of $K_3$ for all $k$ and $n$ since paths linking a pair of vertices in $P_{k, n}$ must pass through all the vertices in between.  In the second main result of this article, we will see that such long paths of parallel edges are essentially the only obstructions to highly edge connected graphs containing a fixed clique as a strong immersion.  We leave the exact statement until Section \ref{sec:strong}.  The proof proceeds by starting from the edge bound in \cite{DDFMMS} and analyzing how parallel edges can be contained in the graph.

We quickly outline how the article will proceed.  In Section \ref{sec:tcd}, we give the definitions of tree-cut decompositions and the tree-cut width of a graph as well as state the structure theorem for immersions.  In Section \ref{sec:tangles}, we introduce several important graph minors tools which we will use going forward, including a necessary result of Robertson and Seymour on tangles, and look at a packing result for subgraphs called spiders.  In Section \ref{sec:spider}, we give an exact characterization when a given graph and tangle contain a spider.  In Section \ref{sec:bdeddeg}, we see that any $k$-edge connected graph which does not contain an immersion of a given $H$ of max degree $k$ must essentially have bounded degree.  In Section \ref{sec:thm1}, we introduce another tool of Robertson and Seymour on finding disjoint paths given the presence of a clique minor, and proceed to give the proof of Theorem \ref{thm:main}.  Finally, in Section \ref{sec:strong}, we turn our attention to strong immersions and state and prove the structure theorem for highly edge connected graphs which do not have a strong immersion of $K_t$ for a fixed value $t$. 

We conclude with some notation.  Let $G$ be a graph and $v \in V(G)$.  The degree $\deg(v)$ is the number of edges incident with $v$ and $\Delta(G)$ is the maximum degree of a vertex in $G$.   The \emph{neighborhood of $v$} is the set of vertices adjacent to $v$ and is denoted $N(v)$.  Note that $\deg(v) \ge |N(v)|$, however given the possibility of parallel edges, it is not necessarily true that equality holds.  A graph is \emph{simple} if it has no parallel edges.  Let $X \subseteq V(G)$.  The set of edges with exactly one endpoint in $X$ is denoted $\delta(X)$.  We will use $\delta(v)$ for $\delta(\{v\})$.  The set of vertices of $V(G) \setminus X$ with a neighbor in $X$ is denoted $N_G(X)$, or simply $N(X)$ when there can be no confusion.  We refer to the graph induced on $X$ by $G[X]$.   We use $G-X$ to refer to the graph induced on $V(G) \setminus X$.  For a subset $F \subseteq E(G)$ of edges, we use $G-F$ to refer to the graph $(V(G), E(G) \setminus F)$.  For subgraphs $G_1$ and $G_2$ of $G$, the subgraph $G_1 \cup G_2$ has vertex set $V(G_1) \cup V(G_2)$ and edge set $E(G_1) \cup E(G_2)$.  We will use $G-x$ as shorthand notation for $G-\{x\}$ when $x$ is a single element of either $V(G)$ or $E(G)$.  Finally, we will often want to reduce $G$ to a smaller graph by identifying a subset of vertices to a single vertex.  Let $X \subseteq V(G)$, define $G'$ be the graph obtained by deleting every edge with both endpoints in $X$ and identifying the vertex set $X$ to a single vertex.  We will say that $G'$ is obtained from $G$ by \emph{consolidating} $X$.  {\em Contraction} is the special case of this operation when $X$ induces a connected graph. Note that consolidating (contracting) a vertex set $X$ can create parallel edges if there is a vertex $v\not \in X$ with more than one edge into $X$. 

\section{Tree-cut decompositions}\label{sec:tcd}

In this section, we give the necessary definitions of tree-cut decompositions as well as state the structure theorem for graphs excluding a fixed clique immersion.  A \emph{near-partition} of a set $X$ is a family of subsets $X_1, \dots, X_k$, possibly empty, such that $\bigcup_1^k X_i = X$ and $X_i \cap X_j = \emptyset$ for all $1 \le i < j \le k$.  

\begin{DEF}
A \emph{tree-cut decomposition} of a graph $G$ is a pair $(T, \zX)$ such that $T$ is a tree and $\zX = \{X_t \subseteq V(G): t \in V(T)\}$ is a near-partition of the vertices of $G$ indexed by the vertices of $T$.  For each edge $e = uv$ in $T$, $T-uv$ has exactly two components, namely $T_v$ and $T_u$ containing $v$ and $u$ respectively.  The \emph{adhesion} of the decomposition is 
\begin{equation*}
max_{uv \in E(T)} \left | \delta_G \left ( \bigcup_{t \in V(T_v)} X_t \right)\right |
\end{equation*}
when $T$ has at least one edge, and 0 otherwise.  
The sets $\{X_t: t \in V(T)\}$ are called the \emph{bags} of the decomposition.
\end{DEF}
In the definition, we allow bags to be empty.  

Let $G$ be a graph and $(T, \zX)$ a tree-cut decomposition of $G$.  Let $t \in V(T)$ be a vertex of $T$.  The \emph{torso of $G$ at $t$} is the graph $H$ defined as follows.  If $|V(T)| = 1$, then the torso of $G$ at $t$ is simply itself.  If $|V(T)| \ge 2$, let the components of $T-t$ be $T_1, \dots, T_l$ for some positive integer $l$.  Let $Z_i = \bigcup_{x \in V(T_i)} X_x$ for $1 \le i \le l$.  Then $H$ is made from $G$ by consolidating each set $Z_i$ to a single vertex $z_i$.  The vertices $X_t$ are called the \emph{core vertices} of the torso.  The vertices $z_i$ are called the \emph{peripheral vertices} of the torso.

We can now state the structure theorem for excluded immersions.  A graph has \emph{$(a, b)$-bounded degree} if there are at most $a$ vertices with degree at least $b$.

\begin{theorem}[\cite{W1}]\label{thm:weakdecomp2}
Let $G$ be a graph and $t\ge 1$ a positive integer.  If $G$ does not admit $K_t$ as a weak immersion, then there exists a tree-cut decomposition $(T, \zX)$ of $G$ of adhesion less than $t^2$ such that each torso has $(t,t^2)$-bounded degree.
\end{theorem}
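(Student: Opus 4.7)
The plan is induction on $|V(G)|$, driven by the search for useful small edge cuts. Let $H_G := \{v \in V(G) : \deg_G(v) \geq t^2\}$ denote the set of high-degree vertices. In the base case $|V(G)| \leq t$, and more generally whenever $|H_G| \leq t$, I take the trivial single-node tree-cut decomposition with bag $X_{t_0} = V(G)$: its adhesion is $0$ and its unique torso is $G$ itself, which has $(t, t^2)$-bounded degree by definition of $H_G$.

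\emph{Recursive step.} Assume $|H_G| > t$. I will try to produce a non-trivial cut $A \subsetneq V(G)$, $A \neq \emptyset$, with $|\delta(A)| < t^2$ along which to split---ideally one for which both $A$ and $V(G)\setminus A$ contain a vertex of $H_G$. Form $G_A$ by consolidating $V(G) \setminus A$ in $G$ to a single vertex $z_A$, and $G_{\bar A}$ by consolidating $A$ to $z_{\bar A}$. Both graphs have strictly fewer vertices than $G$. A key \emph{lifting lemma} asserts that if either $G_A$ or $G_{\bar A}$ admits an immersion of $K_t$, then so does $G$: any composite path in such an immersion that passes through $z_A$ (or uses $z_A$ as a branch vertex) can be substituted by edge-disjoint paths in $G[V(G)\setminus A]$ via Menger's theorem, provided that $A$ was chosen judiciously (for instance, so that $V(G)\setminus A$ contains an $H_G$-vertex into which $z_A$ can be remapped). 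Hence neither $G_A$ nor $G_{\bar A}$ immerses $K_t$, and applying the inductive hypothesis to each yields tree-cut decompositions $(T_A, \zX_A)$ and $(T_{\bar A}, \zX_{\bar A})$. I then glue these decompositions along an edge joining the nodes of $T_A$, $T_{\bar A}$ whose bags contain $z_A$, $z_{\bar A}$, after deleting these synthetic vertices from their bags. A direct check using the definitions of adhesion and torso shows that the adhesion on the new edge equals $|\delta(A)| < t^2$, that adhesions on all other edges are inherited from the sub-decompositions, and that each torso of the combined decomposition agrees with a torso of one of the sub-decompositions---so the inductive property is preserved.

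\emph{Irreducible case and main obstacle.} What remains is to rule out the scenario that $|H_G| > t$ yet no useful small cut exists. The absence of such cuts forces, via Menger's theorem, that every pair of vertices in $H_G$ is joined by at least $t^2$ edge-disjoint paths in $G$. The main obstacle of the proof is to upgrade this pairwise edge connectivity into a genuine $K_t$-immersion on any $t$ chosen vertices of $H_G$, thereby contradicting the hypothesis. My intended tool is Mader's edge splitting-off theorem: iteratively split off edges at vertices outside a chosen $t$-set $\{v_1, \dots, v_t\} \subseteq H_G$ to reduce $G$ to a multigraph supported on this $t$-set while preserving pairwise edge connectivities, yielding a multigraph on $t$ vertices in which every pair is $t^2$-edge-connected. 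In such a small, heavily parallel multigraph a $K_t$-immersion can be produced by a routing argument that exploits the high edge multiplicity, and reversing the splitting-off operations lifts the immersion back to $G$. This routing step, together with the parallel Menger substitutions needed for the lifting lemma in the recursive step, forms the technical heart of the argument and is where the particular value $t^2$ (as opposed to merely $t-1$) in the statement is consumed.
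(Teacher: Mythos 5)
This paper does not prove Theorem~\ref{thm:weakdecomp2}; it is imported from \cite{W1}, so your proposal must be judged against the argument there. Your treatment of the irreducible case is essentially the correct (and the known) one: if no edge cut of order less than $t^2$ separates two vertices of $H_G$, pick $t$ of them, split off completely at all non-terminal vertices (Mader), and observe that the resulting multigraph on the $t$ terminals is $t^2$-edge-connected, hence by Nash-Williams--Tutte has $\lfloor t^2/2\rfloor\ge\binom{t}{2}$ edge-disjoint spanning trees, one per pair of terminals; undoing the splittings turns these paths into a $K_t$ immersion in $G$. That is the concrete form of your ``routing argument,'' and modulo the usual caveats in Mader's theorem (degree-$3$ vertices, cut edges) this half of your plan is sound.

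The genuine gap is the ``lifting lemma'' driving your recursion. The implication you need is: if $G_A$ (obtained by consolidating $V(G)\setminus A$ to $z_A$) immerses $K_t$, then so does $G$. This is false in general. If $z_A$ occurs as an internal vertex of several composite paths, lifting the immersion back to $G$ requires joining \emph{prescribed pairs} of edges of $\delta(A)$ by pairwise edge-disjoint paths inside $G[V(G)\setminus A]$. That is an edge-disjoint linkage problem, and Menger's theorem does not solve it: Menger produces \emph{some} system of edge-disjoint paths between two edge sets with no control over which edge is matched to which. Consolidation strictly increases routing power at $z_A$ (every pairing of $\delta(A)$ becomes realizable there), so $G_A$ can immerse $K_t$ even when $G$ does not --- e.g.\ when two composite paths must both traverse a bottleneck edge inside $G[V(G)\setminus A]$ that is invisible after consolidation. ``Choosing $A$ judiciously'' does not obviously repair this, and no choice is exhibited. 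The proof in \cite{W1} avoids the problem by never lifting an immersion across a consolidation: one puts the vertices of degree at least $t^2$ into classes under the relation $\lambda_G(u,v)\ge t^2$ (which is transitive, since $\lambda(u,w)\ge\min(\lambda(u,v),\lambda(v,w))$), uses the key lemma above \emph{once, in $G$ itself}, to conclude each class has fewer than $t$ members, and then builds the tree-cut decomposition directly from an uncrossed (laminar) family of cuts of order less than $t^2$ separating the classes. If you want to keep your recursive scheme, you must either prove a correct lifting statement (which will not be the one you wrote) or restructure the induction so that immersions are only ever found in $G$, never transported from a quotient.
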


Tree decompositions and their corresponding tree width were introduced by Halin \cite{H} and independently by Robertson and Seymour \cite{RS2}.  The parameters have proven immensely useful in structural graph theory.  Several of the results going forth will use the parameter tree-width.  However, as we will not use any specific properties of tree decompositions in this article, we omit the technical definitions here.  See \cite{Diestel} for further background on tree-width.  

Given tree-cut decompositions, it is natural to ask what is an appropriate definition for the width of such a decomposition. While some care must be taken to deal with 1 and 2 edge cuts, in 3-edge connected graphs, the width is the maximum of the adhesion and the size of the torsos.  See \cite{W1} for more details.  As we will only be considering tree-cut decompositions of graphs which are at least 3-edge connected, we use a simplified definition here for the width of a tree-cut decomposition.
 \begin{DEF}
Let $G$ be a $3$-edge connected graph and $(T, \zX)$ a tree-cut decomposition of $G$.  For each vertex $t \in V(T)$, let $X_t$ be the bag at the vertex $t$.   Let $H_t$ be the torso of $G$ at $t$.  Let $\alpha$ be the adhesion of the decomposition.  The \emph{width} of the decomposition is 
\begin{equation*}
max \{\alpha\} \cup \{|V(H_t)|: t \in V(T)\}.
\end{equation*}
The \emph{tree-cut width} of the graph $G$, also written $tcw(G)$, is the minimum width of a tree-cut decomposition.
\end{DEF} 

 

Note that the tree-cut width of a graph $G$ is not the same as the tree-width of the line graph of $G$.  For example, under the full definition for tree-cut width taking into account one and two edge connected graphs, it holds that all trees have tree-cut width one.   Trivially, there exist trees with vertices of arbitrarily large degree; therefore their line graphs contain arbitrarily large clique subgraphs.  We conclude that the line graphs of trees can have arbitrarily large tree-width.

Tree-cut decompositions share many of the natural properties of tree decompositions. See \cite{W1} for further details.  One fact which we will use in the sections to come is the following result.

\begin{LE}[\cite{W1}]\label{lem:bdedtw}
Let $w, d \ge 1$ be positive integers and let $G$ be a graph with $\Delta(G) \le d$ and  tree-width at most $w$. Then there exists a tree-cut decomposition of adhesion at most $(2w+2)d$ such that every torso has at most $(d+1)(w+1)$ vertices.  Specifically, the tree-cut width of $G$ is at most $(2w+2)d$.
\end{LE}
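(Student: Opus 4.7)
My plan is to start with an ordinary tree decomposition of $G$ of width $w$ and convert it into a tree-cut decomposition via a canonical vertex-to-bag assignment. Let $(T, \{B_t\}_{t \in V(T)})$ be a tree decomposition with $|B_t| \le w+1$. I first \emph{smooth} it by iteratively contracting any edge $tt'$ of $T$ with $B_t \subseteq B_{t'}$ (keeping $B_{t'}$); this preserves the tree decomposition axioms and the width, and afterwards every adjacent pair satisfies $B_t \not\subseteq B_{t'}$ and $B_{t'} \not\subseteq B_t$. Root $T$ at an arbitrary node $r$ and define $X_r = B_r$ and $X_t = B_t \setminus B_{\mathrm{parent}(t)}$ for $t \ne r$. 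This is a near-partition $\zX$ with $X_t \subseteq B_t$, so $|X_t| \le w+1$, and smoothing forces each $X_t$ to be non-empty.

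For the adhesion bound, fix an edge $tt' \in E(T)$ and let $V_t, V_{t'}$ be the two sides of the induced tree-cut. For any $G$-edge $xy$ with $x \in V_t$, $y \in V_{t'}$, the vertices $x$ and $y$ share some bag $B_s$; $s$ lies on one side, say the $t$-side, and then $y$'s bag subtree contains both $s$ (on the $t$-side) and its assigned node (on the $t'$-side), so it must pass through the edge $tt'$, forcing $y \in B_t \cap B_{t'}$. Thus every crossing edge is incident to $B_t \cap B_{t'}$, and the number of such edges is at most $|B_t \cap B_{t'}| \cdot d \le (w+1)d \le (2w+2)d$.

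For the torso bound, fix $t$; the torso has core $X_t$ (at most $w+1$ vertices) plus one peripheral per component of $T - t$. I may assume $G$ is connected (the disconnected case can be handled by gluing component decompositions through a bounded-degree binary tree of empty bags, adding only constant-size torsos and zero-adhesion tree edges). For each component $T_i$ of $T - t$ attached at $c_i$, the set $Z_i = \bigcup_{s \in V(T_i)} X_s$ is non-empty, and connectivity of $G$ produces an edge $xy$ with $x \in Z_i$ and $y \notin Z_i$. By the adhesion argument applied to the cut at $tc_i$, one of $x,y$ lies in $B_t \cap B_{c_i} \subseteq B_t$; but any $x \in Z_i$ has its topmost bag inside $T_i$, and $t \notin V(T_i)$, so $x \notin B_t$, forcing $y \in B_t$. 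Distinct components produce disjoint $Z_i$'s and hence distinct such edges, so the number of peripherals is at most the number of $G$-edges with at least one endpoint in $B_t$, which is at most $|B_t| \cdot d \le (w+1)d$. Therefore $|V(H_t)| \le (w+1)+(w+1)d = (d+1)(w+1)$, and the tree-cut width is at most $\max\{(w+1)d,(d+1)(w+1)\} \le (2w+2)d$ whenever $d \ge 1$.

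The main obstacle is the charging argument for the torso: it requires that smoothing eliminates empty $Z_i$'s and that $G$-connectivity (or a suitable workaround for disconnected $G$) produces a distinct $B_t$-incident edge for each peripheral, otherwise an isolated subtree would silently inflate the peripheral count.
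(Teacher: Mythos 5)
The paper does not actually prove Lemma~\ref{lem:bdedtw}; it is quoted from \cite{W1}. Your construction (assign each vertex to its topmost bag of a smoothed tree decomposition) is the standard one, and your adhesion argument is correct: every crossing edge at a tree edge $tt'$ has an endpoint in $B_t\cap B_{t'}$, giving adhesion at most $(w+1)d$.

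There is, however, a genuine gap in the peripheral-counting step. First, the assertion ``any $x\in Z_i$ has its topmost bag inside $T_i$, and $t\notin V(T_i)$, so $x\notin B_t$'' is false for the component of $T-t$ containing the parent $p$ of $t$: a vertex whose topmost bag is $B_p$ may perfectly well also lie in $B_t$ (its bag-subtree just extends downward through $t$), so for that component the $B_t$-endpoint of your witnessing edge can be the endpoint \emph{inside} $Z_i$. Second, and more importantly, ``disjoint $Z_i$'s and hence distinct such edges'' is a non sequitur: a single edge with one endpoint in $Z_i$ and the other in $Z_j$ serves as the witness for both components, so disjointness of the $Z_i$'s does not make the charged edges distinct. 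As written, each edge incident with $B_t$ could a priori be charged twice, which only yields $l\le 2(w+1)d$ peripherals and breaks the bound $(d+1)(w+1)$.

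The argument can be repaired, but it needs two observations you do not make. (i) For two components of $T-t$ \emph{not} containing the parent of $t$, no $G$-edge joins $Z_i$ to $Z_j$: the bag-subtree of a vertex assigned into such a component is entirely contained in that component, so the two endpoints could not share a bag. Hence among child-side components the witnessing edges are automatically distinct, and each has its $B_t$-endpoint outside $Z_i$; the only possible collision is between the parent-side component and one child-side component, costing $+1$. (ii) That $+1$ must then be absorbed: when $t\ne r$ and $G$ is connected, every edge leaving the parent-side set $Z_0$ is incident with $B_t\cap B_p$, so $B_t\cap B_p\ne\emptyset$ and therefore $|X_t|=|B_t\setminus B_p|\le w$, restoring $|V(H_t)|\le w+\bigl((w+1)d+1\bigr)=(d+1)(w+1)$. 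Without these two points your count does not establish the stated torso bound.
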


Thus, if a graph has bounded tree-width and bounded degree, then it has bounded tree-cut width.  However, the converse is not true.  Again, trees have tree-cut width one but clearly can have arbitrarily large degree.  Also, if we consider the graph consisting of two vertices with $t$ parallel edges connected them, then the graph is $t$-edge connected but has tree-cut width two.  However, as a consequence of our proof of Theorem \ref{thm:main}, we will see that if we eliminate these two possibilities, then the converse of Lemma \ref{lem:bdedtw} does hold.  This is proven in Section \ref{sec:bdeddeg}.

\section{Tangles, minors, and spiders}\label{sec:tangles}

In this section, we introduce many of the graph minors tools which we will use, including tangles, and see how they show a packing result for subgraphs called spiders. Let $G$ be a graph, $k\ge1$ a positive integer, and $X \subseteq V(G)$.  An \emph{$X$-spider of order $k$} consists of $k$ pairwise edge-disjoint paths $P_1, \dots, P_k$ and a vertex $v \in V(G) \setminus X$ such that each $P_i$ has one endpoint equal to $v$, the other endpoint in $X$, and no internal vertex in $X$.  The vertex $v$ is called the \emph{body} of the spider.

Finding and packing spiders can be thought of as a first step of
finding immersions: a spider of order $k$ can possibly serve as the
image of a vertex of degree $k$ and the incident edges. Therefore, it
will be useful to know that spiders have the Erd\H os-P\'osa property: either there is a large edge-disjoint collection or there
is a bounded-size set of edges hitting all spiders.  
\begin{theorem}\label{thm:EPspiders}
There exists a function $f(t,k)$ satisfying the following.  Let $G$ be a graph, $X \subseteq V(G)$, and $k, t \ge 1$ a positive integer.  Either $G$ has $t$ pairwise edge-disjoint $X$-spiders each of order $k$, or there exists a set $Z$ of at most $f(t,k)$ edges intersecting every $X$-spider of order $k$ in $G$.
\end{theorem}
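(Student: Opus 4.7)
The plan is to argue by induction on $t$. We first consolidate $X$ to a single vertex $x^*$ in the derived graph $G'$, so that $X$-spiders of order $k$ in $G$ are in bijection with pairs consisting of a vertex $v \in V(G') \setminus \{x^*\}$ together with $k$ pairwise edge-disjoint $v$-$x^*$ paths in $G'$. The base case $t = 1$ is trivial, with $f(1,k) = 0$. For the inductive step, we dispose of two easy subcases first: if $|\delta_G(X)| < tk$, then any $t$ edge-disjoint spiders would consume $tk$ distinct edges of $\delta_G(X)$, so no such packing exists and $Z = \delta_G(X)$ is itself a hitting set; and if some $v$ satisfies $\lambda_{G'}(v, x^*) \ge tk$, then Menger's theorem produces $tk$ edge-disjoint $v$-$x^*$ paths, which partition into $t$ edge-disjoint spiders all sharing body $v$.

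So assume $|\delta_G(X)| \ge tk$ and $\lambda_{G'}(v, x^*) \le tk - 1$ for every $v$. Pick a body vertex $v_1$ with $\lambda_{G'}(v_1, x^*) \ge k$ (if none exists then no spider exists in $G$ and $Z = \emptyset$ suffices), and let $A_1 \ni v_1$, $x^* \notin A_1$, realize a minimum $v_1$-$x^*$ edge cut, so $k \le |\delta(A_1)| \le tk - 1$. Construct a spider $S_1$ at $v_1$ whose $k$ paths each cross $\delta(A_1)$ exactly once; write $F_{\mathrm{out}}$ for the portion of $E(S_1)$ lying outside $A_1$, i.e.\ the $k$ edge-disjoint paths from the outside endpoints of $\delta(A_1)$ to $x^*$ inside $G'[V(G')\setminus A_1]$. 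Apply the inductive hypothesis with parameter $t-1$ to the graph $G - \delta(A_1) - F_{\mathrm{out}}$: either the recursion yields $t-1$ further edge-disjoint spiders (which combined with $S_1$ give $t$ edge-disjoint spiders in $G$), or it yields a hitting set $Z'$ of size $f(t-1, k)$, in which case $Z = Z' \cup \delta(A_1) \cup F_{\mathrm{out}}$ is a hitting set for $G$: every spider in $G$ either has body in $A_1$ (and then uses $\ge k$ edges of $\delta(A_1)$) or has body outside $A_1$ (and then survives into $G - \delta(A_1) - F_{\mathrm{out}}$, where $Z'$ hits it).

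The main technical obstacle is bounding $|F_{\mathrm{out}}|$, since a priori the $k$ outside paths can be arbitrarily long, which would make the induction blow up. The resolution is to choose $A_1$ to be maximal in the (submodular, hence lattice-structured) family of minimum $v_1$-$x^*$ cut sides; maximality forces the outside graph $G'[V(G')\setminus A_1]$ to have a very constrained structure, in particular its edge-connectivity to $x^*$ is tightly controlled by the $\le tk-1$ edges of $\delta(A_1)$. Combined with the tangle and minor-theoretic machinery developed earlier in this section, this lets us route $F_{\mathrm{out}}$ along edge-disjoint paths whose total length is bounded by some function $h(t,k)$. The resulting recursion $f(t,k) \le f(t-1,k) + h(t,k) + tk$ then establishes the existence of the required function $f$.
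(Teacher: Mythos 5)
Your inductive frame and the two easy subcases are sound, and you correctly identify the bound on $|F_{\mathrm{out}}|$ as the crux; unfortunately that is exactly where the argument breaks, and the bound $h(t,k)$ you postulate does not exist. Consider the graph $G'$ with vertices $v_1, u, p_1, \dots, p_N, x^*$, with $k$ parallel edges from $v_1$ to $u$, a single edge from $u$ to $x^*$, and $k$ parallel edges between consecutive vertices of the path $u, p_1, \dots, p_N, x^*$. Here $\lambda(v_1,x^*)=k$, and the \emph{unique} (hence maximal) side of a minimum $v_1$--$x^*$ cut containing $v_1$ is $A_1=\{v_1\}$, since any larger side has boundary at least $k+1$. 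Thus $\delta(A_1)$ is the $v_1u$ bundle, and $F_{\mathrm{out}}$ must consist of $k$ edge-disjoint $u$--$x^*$ paths; only one of these can use the direct edge, so the other $k-1$ each traverse the entire path through $p_1,\dots,p_N$, giving $|F_{\mathrm{out}}|\ge (k-1)(N+1)$ for \emph{every} admissible routing. Maximality of $A_1$ constrains the cut $\delta(A_1)$, not the geometry of the graph beyond it, and the appeal to ``tangle and minor-theoretic machinery'' cannot repair this, because the quantity you need to bound is genuinely unbounded. This is the fundamental obstacle to any greedy ``find a spider, delete its edges, recurse'' scheme for edge-disjoint spider packing: a single spider may contain arbitrarily many edges, so each round of the recursion charges an unbounded amount to the hitting set.

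The paper sidesteps this entirely: it consolidates $X$ to a vertex $x$, passes to the line graph, where $\delta(x)$ is a large clique inducing a tangle, takes as the family $\{X_j\}$ all $k$-stars of $G$, and applies Theorem~\ref{thm:imp2} (a consequence of Robertson and Seymour's Theorem~\ref{thm:RStangles}). That theorem returns in one shot either $t$ disjoint $k$-stars whose union is free --- and freeness plus Menger in the line graph yields disjoint paths to $\delta(x)$, i.e.\ $t$ edge-disjoint $X$-spiders --- or a set $Z$ of at most $(w+k)^{k+1}$ vertices of the line graph, i.e.\ edges of $G$, after whose removal no $k$-star is free, so that no $X$-spider of order $k$ survives in $G-Z$. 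The hitting set is thus produced globally, with a bound depending only on $t$ and $k$, and no per-spider cleanup is ever performed. To salvage your induction you would need a step that destroys every spider sharing an edge with the packed one while deleting only boundedly many edges, and the example above shows that minimum cuts alone cannot supply such a step.
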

Note that the edge-disjoint spiders in Theorem \ref{thm:EPspiders} may share body vertices.  Theorem \ref{thm:EPspiders} is implicit in \cite{RS23} using the language of tangles.  We give the proof below after presenting the necessary background on separations and tangles.

A \emph{separation} in a graph $G$ is an ordered pair of subgraphs $(A, B)$ which are pairwise edge disjoint such that $A \cup B = G$.  The order of the separation is $|V(A) \cap V(B)|$.  The separation is trivial if $A$ is a subgraph of $B$ or vis versa, $B$ a subgraph of $A$.  Note that separations are usually defined to be unordered pairs.  However,  since  we will only consider separations in the context of tangles where the separations are necessarily ordered, it will be convenient for us to always assume that separations are ordered.  

Tangles play an important part of the theory of minors and allow one to disregard ``small" pieces of the graph which are separated off by ``small" cutsets.  If $G$ is a graph and $\Theta$ a positive integer, a \emph{tangle} in $G$ of order $\Theta$ is a set $\zT$ of separations of $G$, each of order $< \Theta$, such that
\begin{itemize}
\item[i.] for every separation $(A, B)$ of $G$ of order $< \Theta$, one of $(A, B)$ or $(B, A)$ is in $\zT$, and
\item[ii.] if $(A_1, B_1)$, $(A_2, B_2)$, $(A_3, B_3) \in \zT$ then $A_1 \cup A_2 \cup A_3 \neq G$, and
\item[iii.] if $(A, B)\in  \zT$, then $V(A) \neq V(G)$.  
\end{itemize}
See \cite{RS10} for a more in depth introduction to tangles.

A graph $G$ contains a graph $H$ as a minor if $H$ can be obtained from a subgraph of $G$ by repeatedly contracting edges.  If $H$ is a simple graph, then a \emph{model} of an $H$-minor in $G$ is a set of subsets of $V(G)$ $\{X_v \subseteq V(G): v \in V(H)\}$ such that
\begin{itemize}
\item[i.] for all $u, v \in V(G)$, $u \neq v$, $X_u \cap X_v = \emptyset$, 
\item[ii.] $G[X_v]$ is connected for all $v \in V(H)$, and
\item[iii.] for all $uv \in E(H)$, there exists an edge of $G$ with one end in $X_u$ and one end in $X_v$.
\end{itemize} 
The sets $X_v$ are called the \emph{branch sets} of the model.  Clique minors give rise in a natural way to large tangles in a graph.  Let $G$ be a graph, let $t$ and $k$ positive integers such that $t \ge \frac{3}{2} k$, and let $\{X_i: 1 \le i \le t\}$ be the branch sets of a model of $K_t$ in $G$.  For every separation $(A, B)$ of order less than $k$, exactly one of $V(A) \setminus V(B)$ or $V(B) \setminus V(A)$ contains a branch set $X_i$ for some $i$.  Let $\zT$ be the set of separations $(A, B)$ of order less than $k$ such that $V(B) \setminus V(A)$ contains a branch set $X_i$ for some $i$.  Then $\zT$ forms a tangle of order $k$.  We refer to the tangle $\zT$ as the tangle \emph{induced} by the model of $K_t$ of order $k$.  Note that the requirement $t \ge \frac{3}{2}k$ (and not simply $t > k$) is necessary to ensure that property ii in the definition of a tangle holds.

We will need the following theorem of Robertson and Seymour (\cite{RS23}, Proposition 7.2).  Given a tangle $\zT$ in a graph $G$ of order $\Theta$, a set $X \subseteq V(G)$ is \emph{free} with respect to $\zT$ if there does not exist $(A, B) \in \zT$ of order strictly less than $|X|$ such that $X \subseteq V(A)$.

\begin{theorem}[\cite{RS23}]\label{thm:RStangles}
Let $\zT$ be a tangle in a graph $G$, and let $W \subseteq V(G)$ be free relative to $\zT$ with $|W| \le w$.  Let $h\ge 1$ be an integer, and let $\zT$ have order $\ge(w+h)^{h+1} + h$.  Then there exists $W' \subseteq V(G)$ with $W \subseteq W'$ and $|W'| \le (w+h)^{h+1}$ such that for every $(C, D) \in \zT$ of order $< |W| + h$ with $W \subseteq V(C)$, there exists $(A', B') \in \zT$ with $W' \subseteq V(A' \cap B'), |V(A' \cap B') \setminus W'| < h$, $C \subseteq A'$ and $E(B') \subseteq E(D)$.
\end{theorem}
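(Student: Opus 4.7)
The plan is to prove the theorem by induction on $h$, with the engine being the submodularity of the order function on separations, combined with the tangle axioms to keep the relevant ``corner'' separations inside $\zT$.

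\textbf{Submodularity setup.} I would first record that for any two separations $(C_1, D_1)$ and $(C_2, D_2)$ of $G$,
\begin{equation*}
\mathrm{ord}(C_1 \cup C_2, D_1 \cap D_2) + \mathrm{ord}(C_1 \cap C_2, D_1 \cup D_2) \le \mathrm{ord}(C_1, D_1) + \mathrm{ord}(C_2, D_2).
\end{equation*}
If $(C_1, D_1), (C_2, D_2) \in \zT$ and all four corner separations have order strictly less than the order of $\zT$, then axiom (ii) of a tangle rules out $(D_1 \cup D_2, C_1 \cap C_2) \in \zT$ (otherwise the three $A$-sides $C_1$, $C_2$, $D_1 \cup D_2$ would cover $G$), forcing both diagonal corners into $\zT$. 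When moreover $W \subseteq V(C_1) \cap V(C_2)$, freeness of $W$ gives $\mathrm{ord}(C_1 \cap C_2, D_1 \cup D_2) \ge |W|$, so the remaining slack in the submodular inequality passes to the ``union'' corner $(C_1 \cup C_2, D_1 \cap D_2)$.

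\textbf{Base case $h = 1$.} Let $\mathcal{F}$ be the family of $(C, D) \in \zT$ with $W \subseteq V(C)$ and order $\le |W|$. By the submodular argument above, $\mathcal{F}$ is closed under the $C$-side union, so a $\subseteq$-maximum element $(C^*, D^*)$ exists. I would set $W' := W \cup (V(C^*) \cap V(D^*))$, which has size $\le 2w \le (w+1)^2$. For any $(C, D) \in \mathcal{F}$, define $(A', B')$ from $(C^*, D^*)$ by adding the vertices of $W \setminus V(D^*)$ as isolated vertices to $D^*$; one checks (again using axiom (ii) to locate the pair inside $\zT$) that $(A', B') \in \zT$, $V(A' \cap B') = W'$, $C \subseteq C^* = A'$, and $E(B') = E(D^*) \subseteq E(D)$, as required.

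\textbf{Inductive step and main obstacle.} For $h \ge 2$, I would apply the hypothesis with parameter $h - 1$ to obtain $W_0' \supseteq W$ of size $\le (w+h-1)^h$ that handles all separations of order $< |W| + h - 1$. The remaining separations have order exactly $|W| + h - 1$, and for each such $(C, D)$ the set $V(C) \cap V(D) \setminus W$ contributes $h - 1$ ``extra'' boundary vertices; I would use these as an enlarged free set and recurse, splicing the resulting inner $(A', B')$ with the already-built $W_0'$. A recursion of depth $h$ with branching bounded by $w + h$ gives the claimed bound $(w+h)^{h+1}$. The main obstacle is precisely this bookkeeping: a naive recursion inflates the bound past $(w+h)^{h+1}$, so one must show that at each recursive level the set of ``remaining'' separations splits into subfamilies sharing a common extension pattern, so that the added vertices compose multiplicatively (giving the exponent $h+1$) rather than additively.
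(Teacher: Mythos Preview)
The paper does not prove this statement at all: it is quoted verbatim as Proposition~7.2 of Robertson and Seymour~\cite{RS23} and used as a black box (the only work done locally is the easy reformulation in Theorem~\ref{thm:imp2}). So there is no ``paper's own proof'' to compare against; any argument you supply is necessarily going beyond what the paper does.

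As for the sketch itself, the overall strategy---induction on $h$ driven by submodularity of the separation order, together with tangle axiom~(ii) to keep the union corner in $\zT$---is indeed the way Robertson and Seymour prove results of this type, and your base case $h=1$ (take the $C$-side--maximal element of the family of order-$|W|$ separations containing $W$) is essentially correct, modulo a small slip: the corner you need to place in $\zT$ for closure under union is $(C_1\cup C_2, D_1\cap D_2)$, and the covering triple witnessing this via axiom~(ii) is $C_1, C_2, D_1\cap D_2$, not $D_1\cup D_2$ as you wrote. Your inductive step, however, is only a plan, and you yourself flag the real difficulty: controlling the recursion so that the bound stays at $(w+h)^{h+1}$. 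In Robertson and Seymour's argument this is handled not by splitting the remaining separations into subfamilies as you suggest, but by a single augmentation step---one enlarges $W$ to a carefully chosen free set of size at most $(w+h)|W|$ and applies the induction hypothesis once with $h-1$, which gives the multiplicative bound directly. Your proposed ``branching of depth $h$'' picture does not obviously terminate with the right arithmetic, so as written the inductive step has a genuine gap.
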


We will use a slightly reformulated statement which follows
immediately from Theorem \ref{thm:RStangles}.  Given a tangle $\zT$ in
a graph $G$ and a set $Z \subseteq V(G)$ such that $|Z|$ is less than
the order of $\zT$, the tangle $\zT - Z$ is defined as the set of
separations $(A', B')$ of $G-Z$ such that there exists a separation
$(A, B) \in \zT$ such that $Z \subseteq V(A \cap B)$, $A -
Z=A'$, and $B-  Z=B'$ hold. Robertson and Seymour proved that
$\zT- Z$ is indeed a tangle \cite{RS10}.


\begin{theorem}\label{thm:imp2}
Let $G$ be a graph and $\zT$ a tangle in $G$ of order $t$.  Let $k$ and $w$ be positive integers with $t \ge (k+w)^{k+1} + k$.  Let $\{X_j \subseteq V(G): j \in J\}$ be a family of subsets of $V(G)$ indexed by some set $J$ with $|X_j| = k$ for all $j \in J$.  Then there exists a set $J' \subseteq J$ satisfying the following.
\begin{enumerate}
\item for all $j, j' \in J'$, $X_j \cap X_{j'} = \emptyset$, and 
\item $X = \bigcup_{j \in J'} X_j$ is free.  
\end{enumerate}
Moreover, if $|X| < w$, then there exists a set $Z$ with $X \subseteq Z$ and $|Z| \le (w+k)^{k+1}$ satisfying the following: 
\begin{enumerate}
\item[3.] for all $j \in J$, either $X_j \cap Z \neq \emptyset$ or $X_j$ is not free in $\zT - Z$.
\end{enumerate}
\end{theorem}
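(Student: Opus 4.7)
The plan is to construct $J'$ by a greedy selection, and then, when $|X| < w$, obtain $Z$ by a single application of Theorem \ref{thm:RStangles}.

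First, I would build $J'$ greedily. Start with $J' = \emptyset$ and $X = \emptyset$; note that $\emptyset$ is free because there is no separation of negative order. While there exists $j \in J$ with $X_j \cap X = \emptyset$ and $X \cup X_j$ still free in $\zT$, add such a $j$ to $J'$ and replace $X$ by $X \cup X_j$. Since $X$ grows by $k$ at each step and is bounded by $|V(G)|$, the procedure terminates, and at that point $J'$ satisfies properties $1$ and $2$. The key consequence of termination is a maximality statement: for every $j \in J$, either $X_j$ meets the current $X$, or $X \cup X_j$ is not free in $\zT$. This is what drives property~$3$.

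For the moreover part, assume $|X| < w$. Since $t \ge (w+k)^{k+1} + k$, Theorem \ref{thm:RStangles} applies with the free set $W := X$ (so $|W| \le w$) and parameter $h := k$, yielding $W' \supseteq X$ with $|W'| \le (w+k)^{k+1}$. I would set $Z := W'$. To verify property~$3$, fix $j \in J$ with $X_j \cap Z = \emptyset$. Since $X \subseteq Z$, we have $X_j \cap X = \emptyset$, so by greedy maximality $X \cup X_j$ is not free in $\zT$; hence there exists $(C, D) \in \zT$ of order $< |X| + k$ with $X \cup X_j \subseteq V(C)$. Because $W = X \subseteq V(C)$ and the order of $(C,D)$ is $< |W| + h$, Theorem \ref{thm:RStangles} supplies $(A', B') \in \zT$ with $Z \subseteq V(A' \cap B')$, $|V(A' \cap B') \setminus Z| < k$, and $C \subseteq A'$. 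Then the pair $(A'- Z, B' - Z)$ lies in $\zT - Z$ and has order $< k = |X_j|$, while $X_j \subseteq V(C) \setminus Z \subseteq V(A') \setminus Z = V(A' - Z)$; this exhibits $X_j$ as non-free in $\zT - Z$, which is exactly property~$3$.

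The main obstacle is really only bookkeeping: choosing $h = k$ so that the hypothesis $t \ge (w+h)^{h+1}+h$ matches the given bound, and lining up the greedy maximality output ``$(C,D)$ of order $< |X|+k$ containing $X \cup X_j$'' with the hypothesis ``$(C,D)$ of order $< |W|+h$ with $W \subseteq V(C)$'' needed by Theorem \ref{thm:RStangles}. Once these parameters align, the structural content of the argument is a direct combination of a greedy construction with the tangle localization lemma, and no additional ingredient is needed.
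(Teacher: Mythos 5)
Your proposal is correct and follows essentially the same route as the paper: a maximal (there by maximizing $|X|$, here by greedy saturation — the maximality property used is identical) choice of $J'$, followed by a single application of Theorem \ref{thm:RStangles} with $W = X$ and $h = k$, taking $Z = W'$. Your final paragraph in fact spells out in more detail than the paper does why the separation $(A'-Z, B'-Z)$ witnesses that $X_j$ is not free in $\zT - Z$.
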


\begin{proof}
Pick $J' \subseteq J$ such that $J'$ satisfies 1.~and 2.  Moreover, pick $J'$ to maximize $|X|$ for $X = \bigcup_{j \in J'} X_j$.  If $|X| < w$, we apply Theorem \ref{thm:RStangles} to $W = X$ with $h = k$ and the value $w$.  Let $W'$ be the set given by Theorem \ref{thm:RStangles}.  Let $j \in J$ and consider $X_j$.  Assume $X_j \cap W' = \emptyset$.  By the choice of $J'$ to maximize $|X|$, $X_j \cup X$ is not free in $\zT$.  Thus, there exists a separation $(C, D) \in \zT$ with $X_j \cup X \subseteq C$ and of order $< |X| + k$.  The separation $(A', B')$ guaranteed by Theorem \ref{thm:RStangles} ensures that the set $X_j$ is not free in $\zT - W'$, as desired.
\end{proof}
Note that the set $J'$ in Theorem \ref{thm:imp2} may be empty, but in this case it must hold that every $X_i$ is not free in $\zT$.   
%
%
The proof of Theorem~\ref{thm:EPspiders} follows easily by invoking
Theorem~\ref{thm:imp2} on the line graph of $G$:
\begin{proof}[Proof (of Theorem~\ref{thm:EPspiders})]
  First consider the following observation.  If $e_1, e_2, \dots, e_k$
  are distinct edges sharing a common endpoint $v$, then they form a
  $K_k$ subgraph, call it $K$, in the line graph of $G$.  If there
  exist $k$ vertex disjoint paths $P_1, \dots, P_k$ from $K$ to
  $\delta(X)$ in the line graph of $G$, then in the original graph
  $G$, those paths will contain an $X$-spider of order $k$ with body
  equal to $v$.  There is a subtlety here, in that the $X$-spider may
  not contain the edges $e_1, \dots, e_k$ since some $P_i$ may contain
  two edges incident the vertex $v$.

  To find the pairwise disjoint spiders or the bounded hitting set,
  first consolidate the vertex set $X$ to a single vertex $x$.  In the
  line graph, the set of edges $\delta(x)$ forms a large clique
  subgraph which induces a large order tangle.  We let the subsets
  $X_j$ be all possible subsets of $k$ edges with a common endpoint in
  $V(G) \setminus X$ and apply Theorem \ref{thm:imp2} to these sets
  $X_j$ in the line graph.  Either we find $t$ of them whose union is
  free in the line graph, corresponding to $t$ pairwise edge-disjoint
  spiders in $G$, or alternatively, we get the bounded size hitting
  set as desired.
\end{proof}


\section{Excluding a spider}\label{sec:spider}

In this section, we give an exact characterization of when a given graph has an $X$-spider for a fixed subset $X$ of vertices.  However, in the applications to come, we will need a stronger version of this theorem and so we generalize the statement in terms of tangles.

In order to use the results of the previous section, which are based
on tangles and vertex separations, we will need to pass back and forth
between the graph $G$ where we are looking for a spider and the line
graph of $G$.  This leads us to make the following definition.  Let
$G$ be a graph and $U \subseteq V(G)$.  We denote by $N(U)$ the set of
vertices of $V(G) \setminus U$ with at least one neighbor in $U$.  The
set $U$ defines an edge cut in $G$, namely $\delta(U)$.  This edge cut
in $G$ corresponds to a separation in the line graph.  We define the
separation $(A, B)$ of the line graph as follows. Let $L$ be the
line graph $L(G)$.  Let $A = L[E(G[U]) \cup \delta_G(U)]$.  Let $B =
L[E(G - U) \cup \delta_G(U)] - E(L[\delta_G(U)])$. We refer to
$(A, B)$ as the \emph{canonical separation in $L(G)$} for $U$.  Note
that the order of the canonical separation is $|\delta_G(U)|$.

Define a \emph{$k$-star} in a graph $G$ to be a set $F$ of $k$ edges for which there exists a vertex $u$ such that every edge in $F$ has $u$ as an endpoint.  The vertex $u$ is called the \emph{center} of the star.  We now characterize when a graph has a $k$-star which is free with respect to the given tangle in the line graph.  We first need two easy claims on properties of tangles.  The first follows from property ii in the definition of a tangle and the second follows from the first, again along with property ii in the definition of a tangle.

\begin{OB}\label{obs:tangleprops}
  Let $G$ be a graph and $\zT$ a tangle in $G$.  Let $(A, B) \in \zT$ and let $S=V(A)\cap V(B)$.
  Let $(\bar{A}, \bar{B})$ be the separation with $\bar{A} = A \cup
  G[S]$ and $\bar{B} = B - E(G[S])$.  Then $(\bar{A},
  \bar{B}) \in \zT$.  Let $(A', B')$ be a separation with $|V(A \cap
  B)| \ge |V(A' \cap B')|$ such that $V(A') \subseteq V(A)$.  Then
  $(A', B') \in \zT$.
\end{OB}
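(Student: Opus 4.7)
The observation has two parts, both of which follow from the tangle axioms, with axiom (ii) (the ``three small sides do not cover $G$'' property) doing the real work.  I plan to prove the first statement directly and then deduce the second from it.

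For the first statement I will first check that $(\bar A, \bar B)$ is a genuine separation of the same order as $(A, B)$: since $S \subseteq V(A) \cap V(B)$ already, moving the edges of $G[S]$ from $B$ to $A$ leaves $V(\bar A) = V(A)$ and $V(\bar B) = V(B)$ unchanged, so $V(\bar A) \cap V(\bar B) = S$.  By axiom (i) either $(\bar A, \bar B) \in \zT$ or $(\bar B, \bar A) \in \zT$, and I suppose the latter for contradiction.  The naive triple $(A,B),\, (A,B),\, (\bar B, \bar A)$ for axiom (ii) has first-component union equal to $G$ minus the edges $F := E(G[S]) \cap E(B)$, which need not be empty, so an auxiliary separation is needed to absorb $F$.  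I will introduce $(G[S],\, G - E(G[S]))$, which has order $|S|$; its reverse would put $V(G)$ into its first vertex set and so is excluded by axiom (iii), forcing $(G[S],\, G - E(G[S])) \in \zT$.  The triple $(A, B),\, (\bar B, \bar A),\, (G[S], G - E(G[S]))$ then has first-component union $A \cup \bar B \cup G[S] = G$, contradicting axiom (ii).  The degenerate case $E(G[S]) = \emptyset$ is immediate since then $(\bar A, \bar B) = (A, B)$.

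For the second statement I will reduce to the first.  Replace $(A, B)$ by its canonicalization $(\bar A, \bar B) \in \zT$, so that every edge of $G[S]$ lies on the $\bar A$ side.  Assume for contradiction that $(B', A') \in \zT$.  The key claim is $\bar A \cup B' = G$: vertexwise, $V(A') \subseteq V(A) = V(\bar A)$ forces $V(G) \setminus V(\bar A) \subseteq V(B')$; edgewise, an edge $e \in E(G)$ fails to lie in $E(\bar A) \cup E(B')$ only if $e \in E(B) \cap E(A')$, but then both endpoints of $e$ lie in $V(A) \cap V(B) = S$, so $e \in E(G[S]) \subseteq E(\bar A)$.  Applying axiom (ii) to the triple $(\bar A, \bar B),\, (B', A'),\, (B', A')$ now yields $\bar A \cup B' \cup B' = G$, the required contradiction.

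The main subtlety in both parts is the bookkeeping of edges with both endpoints in the separator $V(A) \cap V(B)$: such edges can be freely assigned to either side of a separation, and all the ``missing'' edges in my axiom (ii) unions turn out to be precisely of this form.  This is exactly why axiom (iii), via the auxiliary separation $(G[S], G - E(G[S]))$, and the canonicalization from the first part are the natural tools to absorb them.
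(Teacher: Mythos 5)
Your proof is correct and follows exactly the route the paper indicates (it gives no written proof, only the remark that the first assertion follows from tangle axiom (ii) and the second from the first together with axiom (ii) again). Your careful handling of the edges inside $S$ --- in particular the auxiliary separation $(G[S],\,G-E(G[S]))$, forced into $\zT$ by axiom (iii) --- is a genuinely needed detail that the paper's one-line justification glosses over, and you supply it correctly.
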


\begin{LE}\label{lem:nokspidertangle}
Let $G$ be a graph, and let $L(G)$ be the line graph of $G$.   Let $t, k$ be positive integers with $k <t$.  Let $\zT$ be a tangle in $L(G)$ of order $t$.  Let $U \subseteq V(G)$.  There does not exist a $k$-star $F$ with center $u \in U$ which is free in $\zT$ if and only if there exists a positive integer $l$ and subsets $U_1, \dots, U_l \subseteq V(G)$ such that:
\begin{enumerate}
\item $U_i \cap U_j = \emptyset$ for $1 \le i < j \le l$ and $U \subseteq \bigcup_1^l U_i$, 
\item $|\delta_G(U_i)| < k$ for all $1\le i \le l$, and 
\item if $(A_i, B_i)$ is the canonical separation in $L(G)$ for $U_i$, then $(A_i, B_i) \in \zT$ for all $1 \le i \le l$.
\end{enumerate}
\end{LE}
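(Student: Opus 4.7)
The plan is to prove both directions separately. For the backward direction, given the $U_i$'s and $u \in U_i$, any $k$-star $F$ centered at $u$ lies in $V(A_i)$ where $(A_i, B_i) \in \zT$ is the canonical separation of $U_i$ of order $|\delta_G(U_i)| < k = |F|$, so $F$ is not free. The forward direction requires more work, broken into a local-witness construction and an uncrossing argument.

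For the first step, I construct for each $u \in U$ a set $W_u \subseteq V(G)$ with $u \in W_u$, $|\delta_G(W_u)| < k$, and canonical separation of $W_u$ in $\zT$. When $\deg(u) < k$, take $W_u = \{u\}$: the canonical $(A, B)$ satisfies $V(B) = V(L(G))$, so tangle axioms (iii) and (i) force $(A, B) \in \zT$. When $\deg(u) \ge k$, pick any $k$-subset $F \subseteq \delta_G(u)$; since $F$ is not free, there is $(A, B) \in \zT$ of order $<k$ with $F \subseteq V(A)$. Let $W_u$ be the set of vertices of $G$ incident to some edge in $V(A) \setminus V(B)$. Because two adjacent edges of $G$ give an edge of $L(G)$ that lies in $E(A)$ or $E(B)$, all edges incident to a common vertex must share a side of the separation, yielding $\delta_G(w) \subseteq V(A)$ for every $w \in W_u$; hence $\delta_G(W_u) \subseteq V(A) \cap V(B)$ and $|\delta_G(W_u)| < k$. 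Also $|F| = k > |V(A) \cap V(B)|$ forces some edge of $F$ to lie in $V(A) \setminus V(B)$, placing $u \in W_u$. Observation~\ref{obs:tangleprops} then transfers $(A, B) \in \zT$ to the canonical separation of $W_u$.

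For the second step, I establish the following uncrossing sub-lemma: if $W_1, W_2 \subseteq V(G)$ both have canonical separations in $\zT$ of order $<k$, and $X \in \{W_1 \cup W_2, W_1 \cap W_2, W_1 \setminus W_2, W_2 \setminus W_1\}$ satisfies $|\delta_G(X)| < k$, then the canonical separation of $X$ lies in $\zT$. Supposing otherwise, so that $(B^*, A^*) \in \zT$ for the canonical $(A^*, B^*)$ of $X$, I pass to the filled version $(\bar B^*, \bar A^*) \in \zT$ via Observation~\ref{obs:tangleprops} (which absorbs $E(L[\delta_G(X)])$ into $\bar B^*$). The three separations $(A_1, B_1), (A_2, B_2), (\bar B^*, \bar A^*) \in \zT$ then satisfy $A_1 \cup A_2 \cup \bar B^* = L(G)$: their vertex sets cover every edge of $G$, and every edge $ee'$ of $L(G)$ arising from two $G$-edges sharing a vertex $v$ lies in $E(A_1)$ or $E(A_2)$ when $v \in W_1 \cup W_2$ and in $E(\bar B^*)$ otherwise. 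This contradicts property (ii), so by property (i) we conclude $(A^*, B^*) \in \zT$.

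A greedy algorithm then assembles the $U_i$'s: maintain a pairwise disjoint family $\zF$ of sets whose canonical separations lie in $\zT$, and for each uncovered $u \in U$ initialize $W := W_u$; while $W$ meets some $U_j \in \zF$, submodularity and posimodularity of edge cuts guarantee that at least one of $W \cup U_j, W \setminus U_j, U_j \setminus W$ has boundary below $k$, so I can apply the sub-lemma to either merge $W$ with $U_j$, shrink $W$ to $W \setminus U_j$ (which still contains $u$ since $u \notin U_j$), or shrink $U_j$ to $U_j \setminus W$. The potential $\sum_{S \in \zF \cup \{W\}} |S|$ strictly decreases at each inner step, so the inner loop terminates, and each outer iteration covers a new vertex of $U$. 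The main obstacle will be the union case of the sub-lemma: an edge $ee'$ of $L(G)$ whose shared endpoint $v$ lies outside $W_1 \cup W_2$ with both $e, e' \in \delta_G(W_1 \cup W_2)$ and with other endpoints in $W_1 \setminus W_2$ and $W_2 \setminus W_1$ respectively fails to be covered by $E(A_1) \cup E(A_2) \cup E(B^*)$, which is precisely why the fill to $\bar B^*$ is required.
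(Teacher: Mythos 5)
Your proof is correct and follows essentially the same route as the paper's: for each relevant center you produce a canonical separation of order less than $k$ lying in $\zT$ (your Step 1 is the analogue of the paper's Claim~\ref{cl:1}), and you then make the witness sets pairwise disjoint by uncrossing with the posimodularity inequality $|\delta(W)|+|\delta(U_j)|\ge|\delta(W\setminus U_j)|+|\delta(U_j\setminus W)|$ and re-orienting the resulting canonical separations inside the tangle. The differences are only in execution: the paper gets disjointness from an extremal choice (minimizing $\sum_j |W_j|$) and re-orients via the second part of Observation~\ref{obs:tangleprops}, whereas you run a greedy uncrossing with a strictly decreasing potential and re-orient by invoking tangle axiom (ii) directly; both arguments are sound.
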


\begin{proof}
To see necessity, assume we have such sets $U_1, \dots, U_l$ satisfying 1.-3.  For any $k$-star $F$ with center  $u \in U$, there exists an index $i$ such that $u \in U_i$.  But then the canonical separation $(A_i, B_i)$ for $U_i$ satisfies $F \subseteq V(A_i)$ and is of order at most $k-1$.  Thus, $F$ is not free.

We now show sufficiency.  Assume the statement is false, and pick a counterexample $G$, $\zT$, $U$.  Let $L(G)$ be the line graph of $G$.   Assume there does not exist a $k$-star $F$ which is free with respect to $\zT$ in $L(G)$.  Let $\{F_i: i \in I\}$ be the set of all possible distinct $k$-stars in $G$ with center vertex in $U$ indexed by a set $I$.  Let $u_i$ be the center vertex of $F_i$ for $i \in I$.

In general, separations in the line graph $L(G)$ do not immediately correspond to edge cuts in $G$: one must take into account trivial separations and separations which are not minimal.

\begin{Claim}\label{cl:1}
  Let $F$ be a $k$-star in $G$ with center $u \in U$.  Let $(A, B) \in
  \zT$ be a separation of order strictly less than $k$ in $L(G)$ such
  that $F \subseteq E(A)$ and $E(A \cap B) \subseteq E(A)$.  Assume
  further that $(A,B)$ is selected from all such sets to have minimum
  order. Then there exists a set $W \subseteq V(G)$ with $u \in W$
  such that $(A, B)$ is the canonical separation for $W$.
\end{Claim}
\begin{cproof}
Observe that $V(A) \setminus V(B)$ is not empty, as $|V(A)| \ge k$, and $V(B) \setminus V(A) \neq \emptyset$ by the properties of a tangle.  Thus, $L(G) - V(A \cap B)$ is disconnected, which implies that $G - V(A \cap B)$ is also disconnected.  Each component of $G - V(A \cap B)$ intersects at most one of $V(A)\setminus V(B)$ and $V(B) \setminus V(A)$.  Let $H$ be the components of $G-V(A \cap B)$ containing edges of $V(A) \setminus V(B)$; thus $V(A) \setminus V(B) = E(H)$.  Let $W = V(H)$.  Note that the center $u$ of $F$ is contained in $W$.  We claim that $(A, B)$ is the canonical separation of $W$.  To see this, it suffices to show that every edge of $V(A \cap B)$ has one end in $W$ and one end in $V(G) \setminus W$.  This follows from our choice of $(A, B)$ to be a separation of minimal order in $\zT$ with $F \subseteq V(A)$, proving the claim.
\end{cproof}

We fix a set $\{W_j \subseteq V(G): j \in J\}$ of subsets of $V(G)$ such that if $(A_j, B_j)$ is the canonical separation for $W_j$ for $j \in J$, then 
\begin{enumerate}
\item[a.] $(A_j, B_j) \in \zT$ for all $j \in J$, 
\item[b.] for all $i \in I$, there exists $j \in J$ such that $u_i \in W_j$, and 
\item[c.] subject to a and b, $\sum_{j \in J} |W_j|$ is minimized.
\end{enumerate}

Note that such a set $\{W_j \subseteq V(G): j \in J\}$ exists by Claim \ref{cl:1} and the fact that for every $k$-star $F$ with center in $U$, we can find a separation $(A, B) \in \zT$ of $L(G)$ with $F \subseteq V(A)$ of order strictly less than $k$.  

We claim that the set $\{W_i: i \in J\}$ are pairwise disjoint.  Assume that there exist $j, j' \in J$, $j \neq j'$, such that $W_j \cap W_{j'} \neq \emptyset$.  If $W_j \subseteq W_{j'}$, then $E(A_j) \subseteq E(A_{j'})$, and therefore $\{W_i: i \in J - j\}$ satisfy a and b, contrary to our choice of $\{W_i: i \in J\}$.  Thus, we may assume that both $W_j \setminus W_{j'}$ and $W_{j'} \setminus W_j$ are non-empty.  It is a basic property of edge cuts in graphs that 
\begin{equation*}
|\delta(W_j)| + | \delta(W_{j'})| \ge |\delta(W_j \setminus W_{j'})| +  |\delta(W_{j'} \setminus W_{j})|.
\end{equation*}
Thus, without loss of generality, we may assume that $|\delta(W_j \setminus W_{j'})|< k$.  Then if $(A', B')$ is the canonical separation of $ W_j \setminus W_{j'}$, by Observation \ref{obs:tangleprops}, we have that $(A', B') \in \zT$.  Thus, $\{W_i: i \in J, i \neq j\} \cup \{W_j \setminus W_{j'}\}$ satisfies a and b, contrary to our choice to minimize $\sum_{i \in J} |W_i|$.  

The sets $\{W_i: i \in J\}$ are pairwise disjoint.  It is possible that they do not satisfy $\bigcup_{i \in J} W_i  \supseteq U$, however in this case, it must be true that for every $v \in U \setminus \bigcup_{i \in J} W_i$, we have that $|\delta(v)| < k$ by our choice $\{W_i: i \in J\}$ to contain the center of every $k$-star with center in $U$.  Thus, $\{W_i: i \in J\} \cup \{\{v\}: v \in U \setminus \{W_i: i \in J\}\}$ satisfies the statement of the lemma.  This completes the proof.
\end{proof}

From Lemma \ref{lem:nokspidertangle}, it is easy to characterize when a given graph has an $X$-spider of order $k$. 
\begin{LE}\label{lem:nokspider}
Let $G$ be a graph, $X \subseteq V(G)$, and $k \ge 1$ a positive integer.  Assume $|\delta(X)| \ge \frac{3}{2}k$.  Then $G$ contains an $X$-spider of order $k$ if and only if there does not exist a partition $U_1, \dots, U_m$ of $V(G) \setminus X$ such that for all $1 \le i \le m$, $|\delta(U_i)| < k$.
\end{LE}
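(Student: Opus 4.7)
The backward direction is immediate: if such a partition $U_1,\dots,U_m$ of $V(G)\setminus X$ exists, then for any candidate body $v\in U_i$ Menger's theorem forces the maximum number of edge-disjoint $v$-$X$ paths in $G$ to be at most $|\delta(U_i)|<k$, ruling out an $X$-spider of order $k$. For the forward direction I argue contrapositively, reducing to Lemma \ref{lem:nokspidertangle}. Let $G'$ be obtained from $G$ by consolidating $X$ to a single vertex $x$, so that $|\delta_{G'}(x)|=|\delta_G(X)|\ge \frac{3}{2}k$, and the edges of $\delta_{G'}(x)$ form a clique in $L(G')$ large enough to induce (via singleton branch sets) a tangle $\zT$ of order strictly greater than $k$ in $L(G')$.

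The key correspondence I would establish is: if some $k$-star $F$ in $G'$ centered at $u\in V(G)\setminus X$ is free in $\zT$, then $G$ contains an $X$-spider of order $k$ with body $u$. To prove this, suppose fewer than $k$ edge-disjoint $u$-$x$ paths exist in $G'$; by Menger's theorem there is a set $U\ni u$ with $x\notin U$ and $|\delta_{G'}(U)|<k$. The canonical separation $(A,B)$ of $L(G')$ for $U$ has order $|\delta_{G'}(U)|<k=|F|$; it contains $F$ in $V(A)$ because every edge of $F$ is incident with $u\in U$ and thus lies in $E(G'[U])\cup\delta_{G'}(U)=V(A)$; and it belongs to $\zT$ because $|\delta_{G'}(x)|\ge\frac{3}{2}k>|\delta_{G'}(U)|$ guarantees some edge of $\delta_{G'}(x)$ lies in $E(G'-U)=V(B)\setminus V(A)$, providing a branch set on the ``$B$-side.'' This contradicts freeness of $F$.

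Assuming that no $X$-spider of order $k$ exists in $G$, the contrapositive of the correspondence gives that no $k$-star with center in $V(G)\setminus X$ is free in $\zT$, so Lemma \ref{lem:nokspidertangle} applied to $L(G')$ with $U=V(G)\setminus X$ produces pairwise disjoint $U_1,\dots,U_l\subseteq V(G')$ whose union contains $V(G)\setminus X$, each with $|\delta_{G'}(U_i)|<k$ and canonical separation $(A_i,B_i)\in\zT$. This last condition forces $x\notin U_i$: otherwise every edge of $\delta_{G'}(x)$ would have its endpoint $x$ in $U_i$ and hence lie in $E(G'[U_i])\cup\delta_{G'}(U_i)=V(A_i)$, precluding any branch set from appearing in $V(B_i)\setminus V(A_i)$ and contradicting $(A_i,B_i)\in\zT$. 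Consequently each $U_i\subseteq V(G)\setminus X$, the $U_i$'s partition $V(G)\setminus X$, and $|\delta_G(U_i)|=|\delta_{G'}(U_i)|<k$, as required.

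The main obstacle is the tangle-theoretic translation in the second paragraph: both the claim that a small $u$-$x$ cut in $G'$ produces a separation in $\zT$ witnessing non-freeness of $F$ and its counterpart that a canonical separation in $\zT$ forces $x$ outside the small side rely critically on the hypothesis $|\delta(X)|\ge \frac{3}{2}k$ to ensure that at least one $x$-incident edge survives any cut of size less than $k$. Once this translation is in place, all of the structural work is carried out by Lemma \ref{lem:nokspidertangle}.
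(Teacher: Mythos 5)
Your proof is correct and follows essentially the same route as the paper: consolidate $X$ to a single vertex $x$, use the clique on $\delta_{G'}(x)$ in $L(G')$ to induce a tangle, show that the absence of an $X$-spider forces every $k$-star centered outside $X$ to be non-free, apply Lemma \ref{lem:nokspidertangle}, and observe that $x$ lies in no $U_i$ so the sets pull back to a partition of $V(G)\setminus X$. The only cosmetic differences are that you certify non-freeness via a small edge cut in $G'$ and its canonical separation where the paper invokes Menger directly in the line graph, and that your assertion that the tangle has order strictly greater than $k$ is a harmless overstatement (a clique of order $\frac{3}{2}k$ induces a tangle of order $k$, which is all that is needed).
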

\begin{proof}
Necessity is immediate.  In order to see sufficiency, assume $G$ does not contain an $X$-spider of order $k$.  Let $G'$ be obtained from $G$ by consolidating the vertex set $X$ into a single vertex $x$.  Clearly, there exists an $X$-spider in $G$ of order $k$ if and only if there exists an $x$-spider of order $k$ in $G'$.  The set of edges $\delta_{G'}(x)$ forms a clique subgraph $H$ of $L(G')$ of order at least $\frac{3}{2}k$.  Let $\zT$ be the tangle of order $k$ induced by this model of a clique minor in $G'$.  

We apply Lemma \ref{lem:nokspidertangle} to $G'$ with the set $U = V(G') - x$ and the tangle $\zT$.  Assume, to reach a contradiction, that there exists a $k$-star $F$ which is free with respect to $\zT$ and has center $u \in U$.  In $L(G')$ there does not exist a separation of order $<k$ separating $F$ from $V(H)$.  Thus, there exist $k$ pairwise vertex disjoint paths from $F$ to $V(H) = \delta_{G'}(x)$ in $L(G')$, and consequently, there would exist an $\{x\}$-spider of order $k$ in $G'$, a contradiction.  

Thus, by Lemma \ref{lem:nokspidertangle}, there exist sets $U_1, \dots, U_l$ in $V(G')$ satisfying $1.-3.$  Observe that no $U_i$ contains an edge incident to $x$ by the definition of the tangle $\zT$, and consequently, no $U_i$ contains the vertex $x$.  We conclude that $U_1, \dots, U_l$ is a partition of $V(G') - x = V(G) \setminus X$.  The lemma now follows from the observation that $\delta_{G'}(U_i) = \delta_G(U_i)$ for all $1 \le i \le l$.
\end{proof}
Note that the proof of Lemma \ref{lem:nokspidertangle} can be replicated to show Lemma \ref{lem:nokspider} eliminating the assumption that $|\delta(X)| \ge \frac{3}{2}k$.  
\section{Bounded degree}\label{sec:bdeddeg}

In this section, we show that a $k$-edge connected graph contains a vertex with a sufficiently large neighborhood, then we can immerse any graph of maximum degree $k$.  One consequence of this is to characterize when the converse of Lemma \ref{lem:bdedtw} holds.  Dvorak and Klimosova \cite{DK} have recently found a proof of this statement with better bounds and which lends itself to finding strong immersions as opposed to weak immersions.  We include our proof here, as it illustrates some of the ideas which we will use in the proof of Theorem \ref{thm:main}.  

We first define the graph $S_{l, m}$ to be the graph with $m+1$ vertices $x_1, \dots, x_m, y$ and $l$ parallel edges from $x_i$ to $y$ for all $1 \le i \le m$.  See Figure \ref{fig2}.
\begin{figure}[htb]\label{fig2}
\begin{center}
\includegraphics[scale = .5]{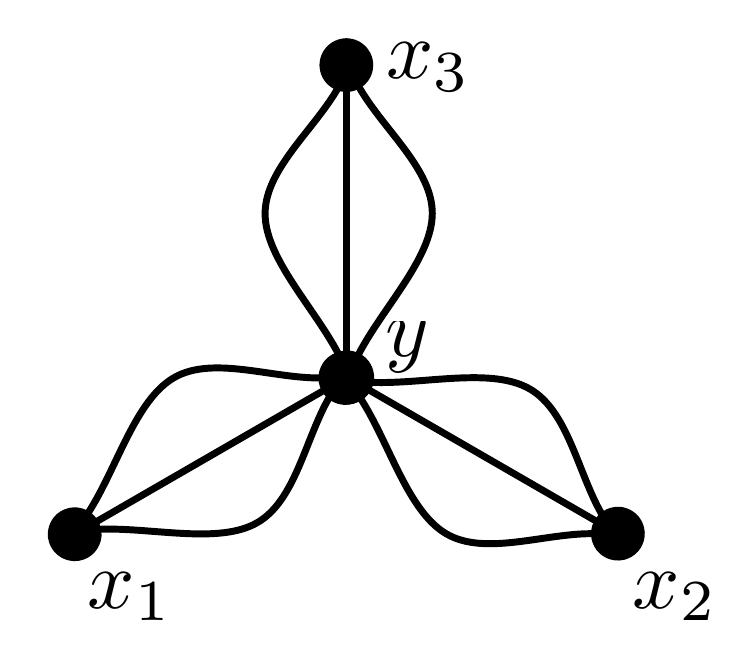}
 \caption{The graph $S_{3,3}$.}
 \end{center}
\end{figure}
The graphs $S_{l, m}$ have the useful property that they contain any fixed graph $H$ as a strong immersion for appropriate chosen values $l$ and $m$.  Specifically, let $H$ be a graph of maximum degree at most $k$ on $n$ vertices for positive integers $k$ and $n$.  Then the graph $S_{k, n}$ contains $H$ as a strong immersion.  To see this, consider the following.  Given the graph $H$, subdivide each edge of $H$, and identify all the new vertices of degree two to a single vertex.  The resulting graph is a subgraph of $S_{k, n}$ for $n = |V(G)|$ and $k$ equal to the maximum degree of $H$.  Reversing this process shows how to arrive at $H$ by identifying pairwise edge disjoint paths linking the branch vertices.

\begin{LE}\label{lem:bdeddeg}
Let $k \ge 1$ be a positive integer, let $G$ be a $k$-edge connected graph, and let $H$ be a graph of max degree $k$ on $n$ vertices.  Let $f$ be the function in Theorem \ref{thm:EPspiders}.  If $G$ has no $H$-immersion, then for all $v \in V(G)$
\begin{equation*}
|N(v)| \le (2k) f\left(k,  k^2\binom{n-1}{k-1}\right) + n.\end{equation*} 
\end{LE}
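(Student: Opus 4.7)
The plan is to prove the contrapositive: assuming $|N(v)|>(2k)f(k,K)+n$ with $K=k^{2}\binom{n-1}{k-1}$, we exhibit an $H$-immersion in $G$. As noted in the paragraph preceding the lemma, the graph $S_{k,n}$ strongly immerses every graph of maximum degree at most $k$ on $n$ vertices, so it suffices to find $S_{k,n}$ as an immersion in $G$. I will use $v$ itself as the image of the central vertex of $S_{k,n}$; the task then reduces to exhibiting $n$ pairwise distinct vertices $x_{1},\dots,x_{n}\neq v$ and $kn$ pairwise edge-disjoint paths of $G$, with exactly $k$ paths joining $v$ to each $x_{i}$.

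The main tool is Theorem~\ref{thm:EPspiders}, applied with $X=\{v\}$, spider order $K$, and spider count $t=k$. This produces one of two alternatives: either (a) $G$ contains $k$ pairwise edge-disjoint $\{v\}$-spiders each of order $K$, or (b) there is a set $Z\subseteq E(G)$ with $|Z|\le f(k,K)$ that meets every $\{v\}$-spider of order $K$ in $G$. Case (a) furnishes $kK=k^{3}\binom{n-1}{k-1}$ pairwise edge-disjoint paths terminating at $v$, distributed among at most $k$ distinct ``far endpoints'' (the spider bodies). The value $K=k^{2}\binom{n-1}{k-1}$ is calibrated precisely so that these paths, together with $n-1$ distinct neighbors of $v$ drawn from the large set $N(v)$ and the direct edges to them, leave enough routing slack to realize $n$ distinct branch vertices each joined to $v$ by $k$ pairwise edge-disjoint paths, globally edge-disjoint across all $i$; here the binomial factor $\binom{n-1}{k-1}$ reflects the number of ways of assigning $k$-bundles of paths to the $n-1$ additional branch vertices, while the $k^{2}$ factor supplies the multiplicative slack that makes the chosen paths form the required pairwise edge-disjoint system. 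The resulting $S_{k,n}$-immersion gives the desired $H$-immersion.

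Case (b) leads to a contradiction with the hypothesis on $|N(v)|$. Since $G$ is $k$-edge connected, every neighbor $w\in N_{G}(v)$ has at least $k$ edge-disjoint paths to $v$, and whenever there are at least $K$ such paths the vertex $w$ is itself the body of a $\{v\}$-spider of order $K$, forcing $Z$ to intersect the spider. A double-counting argument then charges each edge of $Z$ to at most $2k$ neighbors of $v$ --- two endpoints per edge, each contributing the local multiplicity $k$ built into the spider structure --- so $Z$ accounts for at most $(2k)f(k,K)$ neighbors. Allowing an additive $n$ neighbors to escape this charging (these serve as the branch vertices of an immersion constructed directly), we conclude $|N(v)|\le (2k)f(k,K)+n$, contradicting the assumption and ruling out Case (b).

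The chief technical obstacle is Case~(a): executing the combinatorial extraction that turns $k$ spiders of the precise order $K=k^{2}\binom{n-1}{k-1}$, together with $n-1$ distinct neighbors of $v$, into $n$ branch vertices with $k$ pairwise edge-disjoint $v$-paths each, globally edge-disjoint across the branch vertices. This is exactly where the choice of $K$ enters, and making the bookkeeping work --- in particular ensuring that the paths assigned to different $x_{i}$ remain edge-disjoint after splicing with direct edges --- is the most delicate step.
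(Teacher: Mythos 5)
There is a genuine gap, and it stems from applying Theorem \ref{thm:EPspiders} with the roles of the two parameters reversed from what the argument needs: you look for $k$ spiders of order $K=k^2\binom{n-1}{k-1}$ rooted at $\{v\}$, whereas the proof requires $K$ spiders of order $k$ (rooted at a suitable set, not at $\{v\}$). Both of your branches break. In the many-spiders branch you get at most $k$ distinct bodies, each tied to $v$ by $K$ edge-disjoint paths, but $S_{k,n}$ needs $n$ distinct branch vertices each joined to the hub by $k$ globally edge-disjoint paths; your plan to supply the missing ones via ``$n-1$ distinct neighbors of $v$ and the direct edges to them'' provides only one path per neighbor, and the other $k-1$ paths per neighbor are never constructed --- the ``routing slack'' sentence is an assertion standing in for exactly the hard step. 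The hitting-set branch fails outright: a set $Z$ meeting every $\{v\}$-spider of order $K$ constrains only those neighbors of $v$ that have at least $K$ edge-disjoint paths to $v$; neighbors joined to $v$ by fewer edge-disjoint paths are invisible to $Z$, and there can be arbitrarily many of them. Concretely, take $G=S_{k,N}$ with $v$ its center: every other vertex has degree $k<K$, so no $\{v\}$-spider of order $K$ exists, $Z=\emptyset$ is a valid hitting set, and yet $|N(v)|=N$ is unbounded. (That graph does immerse $H$, but your case (b) does not produce an immersion there; it claims a numerical contradiction that does not exist.) The ``charge each edge of $Z$ to at most $2k$ neighbors'' step is likewise unjustified, since a single edge can lie on spiders with many different bodies.

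For comparison, the paper's proof first invokes the structure theorem (Theorem \ref{thm:weakdecomp2}) to obtain a tree-cut decomposition of adhesion less than $n^2$ whose torsos have at most $n$ vertices of degree at least $n^2$; it lets $Z$ be this set of at most $n$ high-degree vertices in the torso containing the offending vertex, and applies Theorem \ref{thm:EPspiders} to $Z$-spiders of order $k$, seeking $k^2\binom{n-1}{k-1}$ of them. In the many-spiders case it uses the degree and adhesion bounds to extract a large subfamily with pairwise distinct bodies and identical trace on $Z$, and then performs an explicit rerouting through spare spiders to assemble $S_{k,n}$ with hub a vertex of $Z$ (not $v$). In the hitting-set case it invokes Lemma \ref{lem:nokspider} to partition $V(G)\setminus Z$ into parts with fewer than $k$ boundary edges in $G-F$, counts the neighbors of $v$ per part to force many parts, and uses $k$-edge-connectivity to force each part to meet $F$, contradicting $|F|$ small. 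None of this machinery --- the structure theorem, the set $Z$ of high-degree torso vertices, the distinct-bodies extraction, the rerouting, or Lemma \ref{lem:nokspider} --- appears in your proposal, and the steps you would need it for are precisely the ones left as assertions.
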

\begin{proof}
Assume $G$ does not admit an immersion of $H$.  Thus, $G$ has no immersion of $K_n$ as well.  By the structure theorem (Theorem~\ref{thm:weakdecomp2}), there exists a tree-cut decomposition $(T, \zX)$ of $G$ having adhesion less than $n^2$ such that for every $t \in V(T)$, the torso $H_t$ of $G$ at the vertex $t$ has $(n, n^2)$-bounded degree.  Assume, to reach a contradiction, that $G$ has a vertex $z$ which satisfies $|N(z)| \ge  (2k) f\left(k,  k^2\binom{n-1}{k-1} \right) + n$, and fix $t$ to be the vertex of $T$ such that $z \in X_t$.  Let $H_t$ be the torso of $G$ at $t$, and let $Z$ be the set of vertices of $H_t$ of degree at least $n^2$.  Note $z \in Z$.  

We apply Theorem \ref{thm:EPspiders} to either find many disjoint
$Z$-spiders in $G$ of order $k$ or a bounded size-hitting set.
Assume, as a case, that there exist $m$ pairwise edge-disjoint
$Z$-spiders $S_1, \dots, S_m$ for $m = k^2\binom{n-1}{k-1} =
\frac{k}{n^2} k\binom{n}{k}(kn)$.  Note that these spiders may share
body vertices, but we argue that there is a large subset of them with
pairwise distinct body vertices.  Every vertex $v \in V(G)\setminus Z$
either has $\deg_G(v) < n^2$ or $v \notin X_t$.  Consider a vertex $v
\notin X_t$.  There exists a subset $U \subseteq V(G) \setminus Z$
with $v \in U$ such that $|\delta_G(U)| \le n^2$ (as the adhesion of $(T,\zX)$ is less than $n^2$), and consequently
there are at most $n^2/k$ distinct indices $i$ such that $S_i$ has the
vertex $v$ as a center.  If instead we consider $v \in X_t \setminus
Z$, $\deg_G(v) \le n^2$, and again there are at most $n^2/k$ distinct
indices $i$ such that $S_i$ has $v$ as a body by the bound on the
degree of $v$.  We conclude that there exists a subset $I \subseteq
\{1, \dots , m\}$ of size at least $k\binom{n}{k}(kn)$ that
corresponds to spiders with pairwise distinct bodies. That is, if
$v_i$ is the body of $S_i$ for $i \in I$, then for all $i, j \in I$,
$i \neq j$, $v_i \neq v_j$.

We may assume that each $Z$-spider of order $k$ in our collection
contains at most $k$ vertices of $Z$ and each path in a spider
contains exactly one vertex of $Z$.  There are at most $k
\binom{n}{k}$ different subsets of $Z$ of size at most $k$.  Thus,
there exists a subset $Z'\subseteq Z$ of size at most $k$ and a subset
$I' \subseteq I$ with $|I'|\ge kn$ such that for every $i \in I'$, $Z'
= V(S_i) \cap Z$.  Pick $n$ distinct indices in $I'$ and let $x_1$,
$\dots$, $x_n$ be the corresponding body vertices. Let $z'$ be an
arbitrary vertex of $Z'$. We construct an immersion of $S_{k,n}$ by
finding $k$ paths from each $x_j$ to $z'$ such that these $kn$ paths
are pairwise edge disjoint. We start with the $k$ edge-disjoint paths
of the spider at $x_j$. If one of these paths terminates at a vertex
$z''\in Z'$ different from $z'$, then we extend the path by picking a
spider $S_i$, $i\in I'$ not yet used, using one of the paths of $S_i$
to go from $z''$ to the body of $S_i$, and then using one of the paths
of $S_i$ going from the body of $S_i$ to $z'$. (By definition of the
set $I'$, spider $S_i$ does have paths terminating in $z'$ and in
$z''$.) Taking into account that the spider at $x_j$ has a path going
directly to $z'$, we have to repeat this rerouting argument at most
$(k-1)n$ times. Thus $|I'|\ge kn$ implies that we can pick a different
spider $S_i$ each time.  We conclude that $G$ contains $S_{k,n}$ as an
immersion, and consequently contains $H$ as an immersion, a
contradiction. This concludes the case when there are many pairwise
edge-disjoint spiders.

Thus, we may assume from Theorem \ref{thm:EPspiders} that there exists a set $F \subseteq E(G)$ of size at most $f\left(k,   k^2\binom{n-1}{k-1} \right)$ such that $G-F$ does not contain a $Z$-spider of order $k$.  By Lemma \ref{lem:nokspider}, there exists a partition $Y_1, \dots, Y_p$ of $V(G-F) - Z= V(G)-Z$ such that $|\delta_{G-F}(Y_i)|<k$ for some positive integer $p$.  

In the graph $G-F$, the vertex $z$ has $|N_{G-F}(v)| \ge |N_{G}(v)| - |F| \ge (2k-1) f\left(k,   k^2\binom{n-1}{k-1}\right) + n$.  Thus, $z$ has at least $(2k-1) f\left(k,   k^2\binom{n-1}{k-1}\right)$ distinct neighbors in $V(G) \setminus Z$ in the graph $G-F$.  As $z$ has at most $k-1$ neighbors in each set $Y_i$, we conclude that $p > 2 f\left(k,   k^2\binom{n-1}{k-1}\right) \ge 2 |F|$.  However, every set $Y_i$ must contain an endpoint of some edge in $F$ by the edge-connectivity of $G$.  This final contradiction completes the proof.
\end{proof}

Recall that Lemma \ref{lem:bdedtw} shows that any graph with both bounded tree-width and bounded degree also has bounded tree-cut width.  As we observed in Section \ref{sec:tcd}, the converse is not true.  However, if we eliminate parallel edges and assume a minimal amount of edge connectivity, the converse does hold.  The proof uses the Grid theorem of Robertson and Seymour.  As we will not need these concepts further in the article, we direct the reader to \cite{Diestel} for additional details.

\begin{theorem}\label{thm:main1}
Let $G$ be a 3-edge connected simple graph.  For all $k \ge 1$, there exists a value $D = D(k)$ such that every graph with tree-cut width at most $k$ has maximum degree at most $D$ and tree-width at most $D$.  
\end{theorem}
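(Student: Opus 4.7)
The plan is to unpack a tree-cut decomposition $(T,\zX)$ of $G$ of width at most $k$; because $G$ is $3$-edge connected, the simplified width definition applies, so each torso $H_t$ satisfies $|V(H_t)|\le k$ and the adhesion is at most $k$. Since the peripheral vertices of $H_t$ correspond bijectively to the components of $T-t$, we have the identity $|V(H_t)|=|X_t|+\deg_T(t)$, which gives $|X_t|\le k$ and $\deg_T(t)\le k$ simultaneously.

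For the maximum degree, fix a vertex $v\in X_t$ and partition its neighbors into those in $X_t$ and those in the consolidated sets $Z_1,\dots,Z_l$ associated with the peripheral vertices $z_1,\dots,z_l$ of $H_t$, so $l\le k$. Simplicity of $G$ gives at most $|X_t|-1\le k-1$ neighbors inside $X_t$. For each $i$, the adhesion bound $|\delta_G(Z_i)|\le k$ limits the edges of $G$ between $v$ and $Z_i$ to at most $k$; simplicity then caps the number of neighbors in $Z_i$ by $k$ as well. Summing, $\deg(v)=|N(v)|\le (k-1)+k\cdot k$.

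For tree-width I would convert $(T,\zX)$ into an ordinary tree decomposition of $G$. For each edge $e\in E(T)$, let $V_e\subseteq V(G)$ be the set of endpoints of $G$-edges crossing the corresponding adhesion cut, so $|V_e|\le 2k$. Set $B_t=X_t\cup \bigcup_{e\ni t}V_e$; then $|B_t|\le k+2k\cdot \deg_T(t)\le k+2k^2$. I would verify the three tree-decomposition axioms: coverage follows from the near-partition; for an edge $uv\in E(G)$ with $u\in X_t$, $v\in X_{t'}$ and $t\neq t'$, the edge crosses every adhesion cut along the unique $T$-path from $t$ to $t'$, so the first such $T$-edge $e$ puts both $u,v$ into $V_e\subseteq B_t$; and for any $v\in X_{t_v}$, the set of $T$-edges whose cuts are crossed by some edge of $G$ incident to $v$ is a union of $T$-paths out of $t_v$, hence a subtree of $T$ containing $t_v$, which makes $\{t:v\in B_t\}$ a subtree as well. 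Consequently $tw(G)\le k+2k^2-1$, and one may take $D(k)=k^2+2k$.

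The argument is essentially bookkeeping; the only step requiring care is the verification of the connectivity axiom. Both hypotheses play a genuine role: $3$-edge connectivity is what permits the simplified width definition, so that $|X_t|$ and $\deg_T(t)$ can be read off from $|V(H_t)|$, while simplicity of $G$ is what converts an adhesion bound on edges into a bound on the number of distinct neighbors; without it the two-vertex, $k$-parallel-edges graph is $k$-edge connected of tree-cut width $2$ but has unbounded degree.
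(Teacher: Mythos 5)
Your proof is correct, but it takes a genuinely different and more elementary route than the paper. The paper deduces the tree-width bound from the fact that bounded tree-cut width excludes a large wall immersion (hence, via the grid theorem, bounds tree-width), and deduces the degree bound from Lemma \ref{lem:bdeddeg}, which itself rests on the structure theorem for excluded immersions and the Erd\H{o}s--P\'osa result for spiders. You instead work directly from the definition: reading $|X_t|\le k$ and $\deg_T(t)\le k$ off the torso size, bounding $\deg(v)$ by splitting edges at $v$ into in-bag edges (where simplicity is used) and edges into the consolidated sets $Z_i$ (where the adhesion bound is used), and then building an explicit tree decomposition with bags $B_t=X_t\cup\bigcup_{e\ni t}V_e$; your verification of the three axioms, including the connectivity axiom via unions of $T$-paths emanating from $t_v$, is sound. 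What your approach buys is a self-contained argument with explicit polynomial bounds ($D(k)=O(k^2)$), whereas the paper's bounds inherit the enormous constants of the grid theorem; what the paper's approach buys is brevity given machinery it has already developed, and a conceptual link to the immersion structure theory. Two cosmetic points: the identity $|V(H_t)|=|X_t|+\deg_T(t)$ can fail when some component of $T-t$ carries only empty bags, but such components contribute no peripheral vertex, no crossing edges, and nothing to $B_t$, so your bounds stand (or one can prune empty subtrees first); and your final value $D(k)=k^2+2k$ is slightly too small for your own tree-width bound of $2k^2+k-1$, so you should take $D(k)=2k^2+k$.
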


\begin{proof}
Let $G$ be a graph of tree-cut width at most $k$ for some fixed $k \ge 1$.  Thus, $G$ does not admit an immersion of a large wall \cite{W1}.  As a consequence, the tree-width of $G$ must also be bounded, as sufficiently large tree-width will ensure the existence of a large wall subdivision which forms a large wall immersion. 

Similarly, by Lemma \ref{lem:bdeddeg}, we we know that $|N(v)|$ is bounded for all $v \in V(G)$.  Given that the graph is simple, it follows that the maximum degree of $G$ is bounded as well.
\end{proof}

\section{Proof of Theorem \ref{thm:main}}\label{sec:thm1}

In order to prove Theorem \ref{thm:main}, we will make use of several auxiliary results.  The first has been shown by Chudnovsky, Dvorak, Kilmosova and Seymour \cite{CDKS}.
\begin{theorem}[\cite{CDKS}]\label{thm:CDKS}
Let $G$ be a 4-edge connected graph.  Then for all $t$ there exists a $W$ such that either $G$ has tree-width at most $W$ or the line graph of $G$ contains a $K_t$-minor. 
\end{theorem}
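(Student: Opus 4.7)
My plan is to prove the contrapositive: assume $G$ is 4-edge-connected and has tree-width at least some large threshold $W=W(t)$, and construct a $K_t$-minor in $L(G)$. The first observation is that a $v$-spider of order $t$ in $G$ (in the sense of Section~\ref{sec:spider}, for some vertex $v$ and some target set $X$) immediately yields a $K_t$-minor in $L(G)$: take the $t$ pairwise edge-disjoint paths $P_1,\dots,P_t$ of the spider as branch sets in $L(G)$; each $P_i$, viewed as a vertex set of $L(G)$, is connected because consecutive edges of a path share a vertex; the branch sets are pairwise disjoint because the paths are edge-disjoint; and the first edges of $P_i$ and $P_j$ share the body $v$, making the branch sets pairwise adjacent in $L(G)$. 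So it suffices to produce a spider of order $t$ somewhere in $G$. Moreover, any vertex of degree at least $t$ trivially furnishes one (each incident edge is a length-one path), so I may henceforth assume $\Delta(G)<t$.

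With $\Delta(G)<t$ and tree-width at least $W$, I would invoke the Grid Theorem of Robertson and Seymour to locate a large wall subdivision $W'$ in $G$. The idea is then to fix a central nail $v$ of $W'$ as a candidate spider body and build an order-$t$ spider from $v$ out to carefully spread-out nails $v_1,\dots,v_t$ elsewhere in $W'$, using the wall's own edge-disjoint highways together with the extra edge-connectivity of $G$: each nail has degree at least $4$ in $G$ but uses only $3$ edges in $W'$, so Menger's theorem supplies edge-disjoint auxiliary paths between these surplus edges.

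The main obstacle is producing $t$ simultaneously edge-disjoint paths starting at $v$, since 4-edge-connectivity alone only guarantees $4$ edge-disjoint paths between any fixed pair. The cleanest resolution is probably to argue via the contrapositive inside a tangle using Lemma~\ref{lem:nokspider}: if no vertex of $G$ admits a spider of order $t$, then for each candidate body $v$ one obtains a partition of $V(G)\setminus\{v\}$ into pieces separated by edge-cuts of size less than $t$. Patching these local partitions together---using the tangle machinery of Theorems~\ref{thm:RStangles} and~\ref{thm:imp2} applied to the large-order tangle of $L(G)$ coming from the high tree-width of $G$ (valid because $\Delta(G)<t$ ensures tree-width in $L(G)$ and in $G$ are polynomially related)---should yield a tree-cut decomposition of $G$ of bounded adhesion. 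Combined with the bound $\Delta(G)<t$, this forces bounded tree-width of $G$, contradicting the initial assumption.
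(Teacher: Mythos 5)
First, note that the paper does not prove this statement at all: it is quoted from Chudnovsky, Dvorak, Klimosova, and Seymour \cite{CDKS}, so there is no internal proof to compare your attempt against. Judged on its own merits, your proposal has a fatal gap. The opening reduction is fine: a vertex of degree at least $t$ already gives a $K_t$ subgraph of $L(G)$, so you may assume $\Delta(G)<t$. But from that point on the spider strategy is self-defeating: the body of a spider of order $t$ must have degree at least $t$ in $G$, so once $\Delta(G)<t$ there is \emph{no} spider of order $t$ anywhere in $G$, and in particular you cannot ``build an order-$t$ spider'' from a nail of the wall. A $K_t$ minor in $L(G)$ corresponds to $t$ pairwise edge-disjoint connected subgraphs of $G$ that pairwise share a vertex; in the bounded-degree regime these cannot all radiate from a single body vertex, which is precisely why the theorem is nontrivial and is the subject of a separate paper.

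The fallback in your last paragraph is also false. If $\Delta(G)<t$, then the hypothesis ``no vertex admits a spider of order $t$'' is vacuously satisfied and Lemma \ref{lem:nokspider} gives no information: the partition of $V(G)\setminus X$ into singletons already has every cut of size $\deg(v)<t$. More importantly, a tree-cut decomposition of bounded adhesion together with bounded degree does \emph{not} force bounded tree-width: the $n\times n$ toroidal grid is $4$-regular and $4$-edge-connected, admits the star decomposition into singleton bags (adhesion $4$), has no spider of order $5$, and yet has tree-width $\Theta(n)$. This single example refutes both halves of your plan; by Theorem \ref{thm:CDKS} its line graph nevertheless contains arbitrarily large clique minors, which must therefore be assembled from non-spider structures. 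To pass from a tree-cut decomposition to bounded tree-width one needs bounded torso size, not merely bounded adhesion, and obtaining that is essentially the content of Theorem \ref{thm:main}, whose proof \emph{uses} Theorem \ref{thm:CDKS} as an ingredient --- so this direction of argument would be circular within the present paper.
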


The second result we will use is due to Robertson and Seymour \cite{RS9}.

\begin{theorem}[\cite{RS9}]\label{thm:RS2}
Let $T = \{s_1, \dots, s_k, t_1, \dots, t_k\}$ be a set of $2k$ distinct vertices in a graph $G$.  Let $X_1, \dots, X_{3k}$ form a model of a $K_{3k}$ minor.  Assume there does not exist a separation $(A, B)$ of order $< 2k$ and an index $i$ such that $T \subseteq A$ and $X_i \subseteq B \setminus A$.  Then there exist $k$ pairwise vertex disjoint paths $P_1, \dots, P_k$ such that the endpoints of $P_i$ are $s_i$ and $t_i$. 
\end{theorem}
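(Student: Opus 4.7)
My plan is to construct the linkage in two stages. In the first stage I use the separation hypothesis together with Menger's theorem to route the $2k$ terminals of $T$ via pairwise vertex-disjoint paths to $2k$ distinct branch sets of the $K_{3k}$-model, with no interior vertex of these paths lying in any branch set. In the second stage I exploit the clique structure of the minor and the $k$ remaining ``free'' branch sets to connect up the prescribed pairs $(s_j,t_j)$.

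For the first stage, I introduce an auxiliary graph $\hat G$ obtained from $G$ by adding a source vertex $\sigma$ adjacent to every vertex of $T$ and apex vertices $y_1,\dots,y_{3k}$ where $y_i$ is adjacent to every vertex of $X_i$. I claim no vertex set $S \subseteq V(\hat G)$ of size strictly less than $2k$ separates $\sigma$ from $\{y_1,\dots,y_{3k}\}$. Indeed, given such an $S$ (WLOG $\sigma \notin S$), let $C$ be the $\sigma$-component of $\hat G - S$ and set $V(A) = (C \cap V(G)) \cup (S \cap V(G))$, $V(B) = (V(G) \setminus C) \cup (S \cap V(G))$. Then $T \subseteq V(A)$ and $|V(A)\cap V(B)| \le |S| < 2k$. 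A counting bound gives at least $3k - |S| > k$ indices $i$ with both $y_i \notin S$ and $X_i \cap S = \emptyset$; for any such $i$, the hypothesis that $y_i$ is separated from $\sigma$ in $\hat G - S$ forces every neighbor of $y_i$, hence every vertex of $X_i$, to lie outside $C$, so $X_i \subseteq V(B)\setminus V(A)$, contradicting the hypothesis of the theorem. Menger's theorem therefore supplies $2k$ vertex-disjoint paths in $\hat G$ from $\sigma$ to $\{y_1,\dots,y_{3k}\}$, and these paths terminate at $2k$ distinct apex vertices $y_{\ell_1},\dots,y_{\ell_{2k}}$. Stripping the $\sigma$ and $y_{\ell_j}$ endpoints, and applying a shortening/swap argument (if a path reaches an interior vertex of some $X_k$ before its intended terminus $y_{i_j}$ with $k \neq i_j$, we truncate to end at $X_k$ and reassign the freed $y_{i_j}$ to another path, the $k$ unused indices providing the slack), yields $2k$ pairwise vertex-disjoint paths $Q_1,\dots,Q_{2k}$ in $G$ from $T$ to $2k$ distinct branch sets $X_{\ell_1},\dots,X_{\ell_{2k}}$, each of which has its only vertex in $\bigcup_i X_i$ at its final vertex.

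For the second stage, say $Q_{s_j}$ ends in $X_{a_j}$ and $Q_{t_j}$ in $X_{b_j}$, so that $a_1,b_1,\dots,a_k,b_k$ are pairwise distinct; let $c_1,\dots,c_k$ be the remaining $k$ indices of $\{1,\dots,3k\}$. Build $P_j$ by concatenating: $Q_{s_j}$; a path in $G[X_{a_j}]$ from the endpoint of $Q_{s_j}$ to an endpoint of an edge $e_j$ of $G$ joining $X_{a_j}$ to $X_{c_j}$ (existence guaranteed by the connectedness of $G[X_{a_j}]$ and property~iii of the minor model); the edge $e_j$; a path in $G[X_{c_j}]$ to an endpoint of an edge $e'_j$ joining $X_{c_j}$ to $X_{b_j}$; the edge $e'_j$; a path in $G[X_{b_j}]$ to the endpoint of $Q_{t_j}$; and the reverse of $Q_{t_j}$. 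Since the $3k$ indices $\{a_j,b_j,c_j\}_{j=1}^k$ are pairwise distinct across the $k$ pairs, each branch set is used internally by at most one $P_j$, so the in-branch-set segments are automatically disjoint across different~$j$. Combined with the disjointness of the $Q_i$'s and their avoidance of branch-set interiors, the $P_j$'s are pairwise vertex-disjoint, as required.

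The principal obstacle is the first stage, in particular ensuring simultaneously that the Menger linkage (a) ends in $2k$ distinct branch sets and (b) has interiors disjoint from $\bigcup_i X_i$. The apex-vertex construction cleanly delivers (a) because the $y_i$'s are single vertices; (b) needs the truncation/swap argument above. The counting bound $|S|<2k\le 3k-k$ drives both the Menger step and the rerouting, and it is precisely the factor of $3$ in the hypothesis ``$K_{3k}$-minor'' that provides the $k$ spare branch sets consumed in the second stage and also absorbs the reassignments needed for (b).
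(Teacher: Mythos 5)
This theorem is quoted by the paper from Robertson--Seymour \cite{RS9} without proof, so there is no in-paper argument to compare against; I am judging your proposal against the standard proof of this type of statement. Your overall architecture (route $2k$ disjoint paths from $T$ into the clique model, then link the pairs through spare branch sets) is the right one, and your Menger step is correct: the auxiliary graph $\hat G$, the translation of a small $\sigma$--$\{y_1,\dots,y_{3k}\}$ separator $S$ into a separation $(A,B)$ of $G$, and the count showing at least $3k-|S|>k$ indices $i$ have $(X_i\cup\{y_i\})\cap S=\emptyset$ and hence $X_i\subseteq V(B)\setminus V(A)$, are all sound. Your second stage is also correct \emph{provided} its input really consists of $2k$ disjoint paths ending in $2k$ pairwise distinct branch sets and meeting $\bigcup_i X_i$ only in their last vertices; the disjointness of the $3k$ indices $a_j,b_j,c_j$ is what makes the in-branch-set segments automatically disjoint.

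The genuine gap is the ``truncation/swap'' step that is supposed to produce that input. The Menger paths end at distinct apex vertices, hence in distinct branch sets, but they may traverse branch sets internally; once you truncate each path at its \emph{first} entry into $\bigcup_i X_i$, the distinctness is lost: two (indeed up to $2k$) paths may first enter $\bigcup_i X_i$ inside the same branch set $X_m$, and ``reassigning the freed $y_{i_j}$ to another path'' cannot repair this, because the paths are fixed subgraphs and relabelling indices does not change where they physically terminate. The $k$ spare branch sets give slack in the \emph{index} bookkeeping, not in the geometry, and the number of collisions can exceed $k$ in any case. Since your stage 2 genuinely needs the feet to lie in distinct branch sets (the branch sets are only connected, so two link-up segments forced through the same $X_m$ may be unable to avoid each other), this is not a cosmetic issue. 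The standard repair is an extremal choice of the path system rather than a greedy truncation: among all families of $2k$ disjoint $T$--$\bigl(\bigcup_i X_i\bigr)$ paths, pick one minimizing the number of vertices (or edges) used outside the model $\bigcup_i X_i$, deduce from minimality that each path meets $\bigcup_i X_i$ in a suitably controlled way, and then run a rerouting argument through the at least $k$ branch sets containing no foot to split up paths sharing a home branch set (this is the content of, e.g., Diestel's lemma on linkages via $K_{3k}$ minors). As written, your proof does not establish the key intermediate claim, so it is incomplete.
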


The proof of Theorem \ref{thm:main} uses the fact that Lemma \ref{lem:bdeddeg} essentially bounds the degree of a potential counterexample.  However, Lemma \ref{lem:bdeddeg} only bounds the size of the neighborhood of each vertex; to bound the degree, we need to eliminate the possibility of large numbers of parallel edges between pairs of vertices.  This is a somewhat annoying technicality.  Thus, we effectively split the proof of Theorem \ref{thm:main} into two parts.  The first, stated below as Theorem \ref{thm:main3} restricts to the case when there are a bounded number of parallel edges between any pair of vertices.  Then in order to prove Theorem \ref{thm:main}, we only need to bound the number of such parallel edges.

\begin{theorem}\label{thm:main3}
There exists a function $g'$ satisfying the following.  Let $k \ge 4, n \ge 1, D \ge 1$ be positive integers.  
 Let $H$ be a graph with maximum degree $k$ on $n$ vertices.  Let $G$ be a $k$-edge connected graph such that $deg(v) \le D$ for all $v \in V(G)$.  Then either $G$ admits an immersion of $H$, or $G$ has tree-cut width at most $g'(k,n,D)$.
\end{theorem}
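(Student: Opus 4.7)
The plan is a tree-width dichotomy. Since $G$ is $k$-edge connected with $k \ge 4$, Theorem~\ref{thm:CDKS} yields that either the tree-width of $G$ is at most some value $W = W(k,n)$, in which case the degree bound $\Delta(G) \le D$ combined with Lemma~\ref{lem:bdedtw} gives tree-cut width at most $(2W+2)D$ and we are done, or $L(G)$ contains $K_t$ as a minor for $t = t(k,n)$ chosen as large as needed. In the latter case, the $K_t$-minor induces a tangle $\zT$ in $L(G)$ of order roughly $2t/3$, which will serve as a ``switchboard'' for constructing the immersion.

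The immersion is built in two phases. First, I aim to locate $n$ prospective branch vertices $v_1,\dots,v_n$ in $G$ together with pairwise edge-disjoint $k$-stars $F_{v_i} \subseteq \delta_G(v_i)$ whose union $Y = \bigcup_i F_{v_i}$ is free in $\zT$. To do so, pick a $k$-star $F_v$ at every vertex $v$ of $G$ (possible by $k$-edge connectivity), view each $F_v$ as a $k$-element subset of $V(L(G)) = E(G)$, and apply Theorem~\ref{thm:imp2} to this family with respect to $\zT$. Choosing $t$ large enough, either we obtain the $n$ desired free stars (the \emph{good case}), or Theorem~\ref{thm:imp2} supplies a bounded-size set $Z \subseteq E(G)$ such that for every $v$ either $F_v \cap Z \neq \emptyset$ or $F_v$ is not free in $\zT - Z$ (the \emph{bad case}).

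In the good case, apply Theorem~\ref{thm:RS2} inside $L(G)$ to realize $E(H)$ as edge-disjoint paths in $G$. Assign to each edge $uv \in E(H)$ one unused edge of $F_{\pi(u)}$ and one of $F_{\pi(v)}$, producing at most $kn/2$ terminal pairs in $L(G)$. Extract a rooted $K_{3|E(H)|}$-minor from the $K_t$-minor (which is possible for $t$ large enough); freeness of $Y$ in $\zT$ is precisely the condition that rules out a small separation of $L(G)$ isolating these terminals from the minor, so Theorem~\ref{thm:RS2} supplies the required vertex-disjoint paths in $L(G)$. These translate back to edge-disjoint paths in $G$ which, together with $v_1,\dots,v_n$, realize $H$ as an immersion.

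The bad case is the main technical obstacle: from the hitting set $Z$ we must extract a tree-cut decomposition of bounded width. Let $Y^* \subseteq V(G)$ denote the endpoints of the edges of $Z$, a bounded set. Every $v \in V(G) \setminus Y^*$ has $F_v$ not free in $\zT - Z$, so Lemma~\ref{lem:nokspidertangle} partitions $V(G)\setminus Y^*$ into pieces $U_1,\dots,U_l$ with $|\delta_G(U_i)| < k$. This naturally produces a tree-cut decomposition with $Y^*$ as a central bag and the $U_i$ hanging off of it, with adhesion below $k$. The remaining challenge is to control the torso sizes: each piece $U_i$ is either already small, in which case it contributes directly, or it is large and one must recurse inside the graph obtained from $U_i$ by consolidating $V(G)\setminus U_i$ into a single vertex. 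The recursion inherits both the bounded-degree hypothesis (up to a single new vertex of degree less than $k$) and the absence of an $H$-immersion; since each successful recursion step either shrinks the graph substantially or locates new branch vertex material, the process terminates with a tree-cut decomposition of width bounded by a function $g'(k,n,D)$. Making the recursion and torso accounting precise, while juggling the weakening of edge-connectivity inside each $U_i$, is where the bulk of the work lies.
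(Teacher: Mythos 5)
Your first phase and your good case follow the paper's proof: the tree-width dichotomy via Theorem~\ref{thm:CDKS} combined with Lemma~\ref{lem:bdedtw}, the tangle induced by the clique minor in $L(G)$, Theorem~\ref{thm:imp2} applied to $k$-stars, and Theorem~\ref{thm:RS2} to route the composite paths when a large free union of disjoint stars exists. One repair is needed even there: apply Theorem~\ref{thm:imp2} to the family of \emph{all} $k$-stars of $G$, not to one chosen star $F_v$ per vertex. Otherwise the bad case only tells you that your particular $F_v$ is hit by $Z$ or non-free in $\zT-Z$, which is not the hypothesis of Lemma~\ref{lem:nokspidertangle}; that lemma requires \emph{every} $k$-star centered in $U$ to be non-free before it yields the partition into sets of small edge boundary.

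The genuine gap is your treatment of the bad case. You propose to turn the partition $U_1,\dots,U_l$ into a tree-cut decomposition with a central bag and then recurse inside the large parts, and you concede that the recursion, the torso accounting, and the loss of edge-connectivity inside each $U_i$ are unresolved. That is precisely where the plan fails: inside a part $U_i$ the graph need not be $k$-edge connected (so neither the theorem nor your machinery applies there), there is no induction measure, and the central torso has no a priori size bound. The paper's argument is that the bad case simply cannot occur, by a counting argument that plays the $k$-edge-connectivity of $G$ against the size of the surviving clique minor. Choosing $\ell$ large enough, after deleting the branch sets of $K$ that meet $Z$ one still has a clique minor $K'$ of order $4k(2Dn)^{k+1}$ in $L(G-Z)$, inducing a tangle $\zT'$; since each $U_i$ has $|\delta_{G-Z}(U_i)|<k$ and its canonical separation lies in $\zT'$, at most $k-1$ branch sets of $K'$ meet $E(U_i)\cup\delta(U_i)$, which forces the number of parts to satisfy $s>2(2Dn)^{k+1}\ge 2|Z|$. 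On the other hand $|\delta_G(U_i)|\ge k$ by edge-connectivity while $|\delta_{G-Z}(U_i)|<k$, so every part has an endpoint of an edge of $Z$ on its boundary, and each edge of $Z$ can serve at most two parts, giving $s\le 2|Z|$ --- a contradiction. Consequently the only non-immersion outcome is bounded tree-width, and the tree-cut decomposition comes solely from Lemma~\ref{lem:bdedtw}; no decomposition is ever assembled from the partition. You should replace your recursion with this counting contradiction.
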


\begin{proof}
Let $G$ be $k$-edge connected but not admit an immersion of $H$.  Let $|V(H)| = n$.   Let $L(G)$ be the line graph of $G$.  Fix 
\begin{equation*}
\ell = \frac{3}{2}(4k+1)(2Dn)^{k+1}.
\end{equation*}
By Theorem \ref{thm:CDKS}, there exists a value $W$ such that either
$G$ has tree-width at most $W$ or the line graph of $G$ contains $K_\ell$
as a minor.  We set $g'(k, n, D) = (2W+2)D$. If the tree-cut with of $G$ is greater than $g'(k,n,D)$, then Lemma
\ref{lem:bdedtw} implies that $G$ has tree-width greater than $W$, and consequently we may assume in the following that $L(G)$
contains $K_\ell$ as a minor.  Fix a model $K$ of $K_\ell$ as a
minor in $L(G)$.  The model $K$ defines a tangle $\zT$ of order
$4k(2Dn)^{k+1} + k$ in $L(G)$.

Let $\{X_i: i \in I\}$ be the set of $k$-stars in $G$.  We apply Theorem \ref{thm:imp2} to the tangle $\zT$ and the sets $\{X_i: i \in I\}$.  Let $I' \subseteq I$ be the subset satisfying 1.~and 2.~in the statement.  Let $X = \bigcup_{i \in I'} X_i$.  Assume, as a case, that $|X| \ge Dn$.  As every vertex of $G$ has degree at most $D$, and the stars $X_i$ for $i \in I'$ are pairwise disjoint, there exists a subset $I'' \subseteq I'$, $|I''| \ge kn$ such that for every $i, j \in I''$, $X_i$ and $X_j$ have distinct center vertices.  Assign each $X_i$ for $i \in I''$ to a distinct vertex of $H$.  Let $X' = \bigcup_{i \in I''} X_i$, and let $m = |E(H)|$.  There is a natural way to label elements of $X'$ by $s_1, t_1, s_2, t_2, \dots, s_{m}, t_{m}$ such that if there exist pairwise vertex disjoint paths $P_1, \dots, P_{m}$ (in $L(G)$) such that the ends of $P_i$ are $s_i$ and $t_i$, then in the original graph $G$ admits an immersion of $H$.  Given that $X' = \bigcup_{i \in I''}$ is free with respect to the tangle $\zT$, by Theorem \ref{thm:RS2}, there exist the desired disjoint paths $P_1, \dots, P_{m}$, and we conclude that $G$ contains $H$ as an immersion.

Thus, we may assume $|X| < Dn$, and consequently, there exists a set $Z \subseteq E(G)$ with $|Z| \le (2Dn)^{k+1}$ satisfying 3.~in the statement of Theorem \ref{thm:imp2}.  There exists a model $K'$ of a $K_{4k(2Dn)^{k+1}}$ minor in  $L(G-Z)$ obtained by simply discarding any branch set of $K$ containing an element of $Z$.  Then $K'$ induces a tangle $\zT'$ in $L(G -Z)$ of order $2k(2Dn)^{k+1}$.  Note that the set of $k$-stars of $G-Z$ is exactly the set $\{X_i: i \in I \text{ and }X_i \cap Z = \emptyset\}$.  Thus, by property 3.~and the definitions of $\zT$ and $\zT'$, we see that there does not exist a $k$-star in $G-Z$ which is free with respect to $\zT'$.  

We conclude by Lemma \ref{lem:nokspidertangle} that there exists a partition $U_1, \dots, U_s$ of $V(G - Z)$ that satisfies $1.-3.$~in the statement of Lemma \ref{lem:nokspidertangle} for some positive integer $s$.  Note that at most $k-1$ distinct branch sets of $K'$ intersect the edge set $E_{G-Z}(U_i) \cup \delta_{G-Z}(U_i)$ for all $1 \le i \le s$ by property 3.  Thus, given the order of the clique minor $S'$, we conclude that $s > 2(2Dn)^{k+1}$.  However, for all $1 \le i \le s$, there exists at least one edge of $Z$ in $\delta_G(U_i)$ by the overall connectivity of the graph.  By our bound on $s$, we see $|Z| > (2Dn)^{k+1}$, a contradiction.  This completes the proof of the theorem.
\end{proof}

We will need several quick observations in order to eliminate the degree bound in the statement of Theorem \ref{thm:main3}.  
\begin{OB}\label{obs1}
Let $G$ and $H$ be graphs such that $G$ does not admit a (strong) immersion of $H$.  Assume there exists $u, v \in V(G)$ with at least $|E(H)|$ parallel edges from $u$ to $v$ in $G$.  Let $G'$ be the graph obtained from $G$ by consolidating the vertex set $\{u,v\}$.  Then $G'$ does not admit a (strong) immersion of $H$.
\end{OB}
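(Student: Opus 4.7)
The plan is to prove the contrapositive: given a (strong) immersion $\pi$ of $H$ in $G'$, I would construct an immersion of $H$ in $G$. Let $w$ be the vertex of $G'$ obtained by consolidating $\{u,v\}$; consolidation deletes precisely the $uv$-parallel edges, so the edges of $G'$ are in canonical bijection with the edges of $G$ that are not $uv$-parallel, and every composite path $\pi(f)$, being a simple path, meets $w$ at most once.

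First, I would define the branch vertex map $\pi'$ in $G$ by $\pi'(x) = \pi(x)$ whenever $\pi(x) \ne w$, and, if some (necessarily unique) vertex $x_0 \in V(H)$ satisfies $\pi(x_0) = w$, by $\pi'(x_0) = u$. Next, I would lift each composite path $\pi(f)$ to a path $\pi'(f)$ in $G$ as follows. If $\pi(f)$ avoids $w$, its canonical lift is already a path in $G$. Otherwise, the one or two edges of $\pi(f)$ at $w$ lift to edges of $G$ with endpoints $a_1, a_2 \in \{u,v\}$; if these endpoints disagree (or, when $w$ is an endpoint of $\pi(f)$, the single such endpoint disagrees with $\pi'(x_0)$), I would insert a single $uv$-parallel edge to repair the discontinuity, producing a path $\pi'(f)$.

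Each $\pi(f)$ thus needs at most one parallel $uv$-edge for its lift, so the $|E(H)|$ composite paths collectively need at most $|E(H)|$ such edges, which the hypothesis supplies; the inserted parallel edges can then be chosen pairwise distinct. Each $\pi'(f)$ is a simple path because the vertices it introduces lie in $\{u,v\}$ and are distinct there, while its other vertices come from the non-$w$ vertices of $\pi(f)$ and so are pairwise distinct and lie outside $\{u,v\}$. Edge-disjointness of the $\pi'(f)$ follows from the edge-disjointness of the $\pi(f)$ together with the distinctness of the inserted parallel edges.

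The main obstacle I expect is the strong immersion case, where one must further verify that no branch vertex of $\pi'$ appears internally in any $\pi'(f)$. The only new internal vertices created by the lift lie in $\{u,v\}$, and the branch vertices of $\pi'$ intersect $\{u,v\}$ in at most the single vertex $\pi'(x_0) = u$ (if $\pi(x_0) = w$; otherwise neither $u$ nor $v$ is a branch vertex). When $\pi(x_0) = w$, strong immersion in $G'$ forces $w$ to be an endpoint of every $\pi(f)$ that meets it, so the inserted $\{u,v\}$-vertex in $\pi'(f)$, if any, equals $v \ne u$ and is not a branch vertex. The remaining internal vertices of $\pi'(f)$ coincide with internal vertices of $\pi(f)$, which are non-branch by the strong immersion hypothesis on $\pi$.
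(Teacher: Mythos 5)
Your proof is correct and takes essentially the same approach as the paper, which only sketches the argument: lift the immersion from $G'$ back to $G$, using at most one of the $|E(H)|$ parallel $uv$-edges per composite path to repair each passage through the consolidated vertex. Your more detailed verification of the strong-immersion case (that the only newly internal vertex in $\{u,v\}$ is the non-branch vertex $v$) is exactly the point the paper leaves implicit.
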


To see that Observation \ref{obs1} holds, let $x$ be the vertex of $G'$ corresponding to $\{u,v\}$.  Then if $G'$ had an immersion of $H$, at most $|E(H)|$ distinct composite paths would use an edge of $\delta_{G'}(x)$.  Given the large number of edges in $G$ linking $u$ and $v$, we can easily extend such an immersion in $G'$ to an immersion in $G$.  Note that if the immersion in $G'$ is strong, the immersion we construct in the original graph $G$ will be strong as well.

\begin{OB}\label{obs2}
Let $G$ and $H$ be graphs.  Let $|V(H)| = n$ and $|E(H)| = m$.  Assume $G$ is connected and has at least $n$ vertices.  Assume as well that, for every pair of vertices $u, v \in V(G)$, if $u$ and $v$ are adjacent then there are at least $m$ parallel edges in $G$ with ends $u$ and $v$.  Then $G$ contains $H$ as an immersion.
\end{OB}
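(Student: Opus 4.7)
The plan is to use the connectedness of $G$ to build the immersion along a spanning tree, using the abundance of parallel edges to ensure edge-disjointness.

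First I would fix a spanning tree $T$ of the underlying simple graph of $G$. Since $|V(G)| \ge n$, I can pick an arbitrary injection $\pi : V(H) \to V(G)$; these $n$ images will serve as the branch vertices of the immersion. For every edge $f \in E(H)$ with endpoints $x, y$, let $P_f$ denote the unique path in $T$ from $\pi(x)$ to $\pi(y)$. Each $P_f$ is a walk in the underlying simple graph of $G$; it remains to lift each $P_f$ to an actual path in $G$ (by choosing, at every simple edge of the walk, one of the parallel copies) in such a way that the lifts $\pi(f)$ for $f \in E(H)$ are pairwise edge-disjoint.

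This lifting is done one simple tree edge at a time. For a fixed simple edge $e = uv$ of $T$, let $\mathcal{F}(e) = \{f \in E(H) : P_f \text{ traverses } e\}$. Trivially $|\mathcal{F}(e)| \le |E(H)| = m$, and by hypothesis there are at least $m$ parallel edges with ends $u,v$ in $G$. Hence I can assign to each $f \in \mathcal{F}(e)$ a distinct parallel copy of $e$, to be used by the lift of $P_f$ at this tree edge. Performing these assignments independently for every simple tree edge yields the lifted paths $\pi(f)$, and they are pairwise edge-disjoint: two distinct edges $f, f' \in E(H)$ can only share an edge of $G$ if $P_f$ and $P_{f'}$ share a common simple tree edge $e$ and we assigned them the same parallel copy, which the construction explicitly forbids.

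Together with the injectivity of $\pi$ and the fact that each $\pi(f)$ is a path from $\pi(x)$ to $\pi(y)$, this verifies all three conditions in the definition of a (weak) immersion, and so $G$ contains $H$ as an immersion. There is no real obstacle here: the argument is essentially a routing on a tree where multiplicity-$m$ parallel edges comfortably accommodate all $m$ edges of $H$.
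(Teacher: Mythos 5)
Your proof is correct and rests on the same idea as the paper's: route one path in $G$ for each edge of $H$ between the images of its endpoints, and observe that since at most $m$ of these paths can demand a copy of any given adjacency while every adjacency carries at least $m$ parallel edges, the paths can be made pairwise edge-disjoint. The paper packages this as an induction on $|E(H)|$ (route one path, delete its edges, recurse), whereas you route all $m$ paths simultaneously along a spanning tree, which is a cosmetic difference; your version has the mild advantage of not having to re-verify connectivity and the multiplicity hypothesis after each deletion.
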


Observation \ref{obs2} can be seen by induction on $|E(H)|$.  If $|E(H)| = 0$, there is nothing to prove.  Otherwise, fix an injection from $V(H)$ to $V(G)$ and pick an edge $uv$ in $H$.  Pick a path in $G$ linking the vertices of $G$ corresponding to $u$ and $v$, delete the edges from $G$, and apply induction.

We now give the proof of Theorem \ref{thm:main}.
\begin{proof}[Proof. Theorem \ref{thm:main}]
Assume $G$ does not admit an immersion of $H$.  Let $|V(H)| = n$.  Consider the auxiliary graph $G'$ defined on the vertex set $V(G)$ such that two vertices $u$ and $v$ of $G'$ are adjacent if there exist at least $kn$ parallel edges in $G$ from $u$ to $v$.  Let the components of $G'$ have vertex sets $V_1, V_2, \dots, V_s$ for some positive integer $s$.  By Observation \ref{obs2}, each component of $G'$ has at most $n-1$ vertices.

Let $G_1$ be the graph obtained from $G$ by consolidating the vertex sets $V_i$ for $1 \le i \le s$.  By Observation \ref{obs1}, the graph $G_1$ does not admit an immersion of $H$.  Let the vertices of $G_1$ be $v_1, \dots, v_s$ where each $v_i$ corresponds to the consolidated set $V_i$ of vertices for $1 \le i \le s$.  Moreover, between every pair of vertices $v_1, v_2$, there are at most $kn^3$ parallel edges given that each of $v_1$ and $v_2$ correspond to at most $n-1$ vertices of $G$.  

By Lemma \ref{lem:bdeddeg}, it now follows that there exists a value $D$ depending only on $k$ and $n$ such that $\Delta(G_1) \le D$.  Thus, by Theorem \ref{thm:main3}, we conclude that the tree-cut width of $G_1$ is at most $g'(k, n, D)$.  Let $(T, \zX_1)$ be a tree-cut decomposition of $G_1$ of minimum width.  Let $\zX_1 = \{X^1_t: t \in V(G)\}$.  For all $t \in V(T)$, let $X_t = \bigcup_{i: v_i \in X^1_t} V_i$ and let $\zX = \{X_t: t \in V(T)\}$.  We conclude that $(T, \zX)$ is a tree-cut decomposition of width at most $g(k, n) = ng'(k,n, D)$.  This completes the proof of the theorem.
\end{proof}
\section{Highly connected graphs with no strong immersion of $K_t$}\label{sec:strong}

DeVos et al.~have shown that every simple graph with large minimum degree has a strong immersion of a fixed clique. 

\begin{theorem}[\cite{DDFMMS}]\label{thm:DDFMMS}
Every simple graph with minimum degree at least $200t$ contains a strong immersion of $K_t$.
\end{theorem}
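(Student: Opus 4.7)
The plan is to reduce $G$ to a highly edge-connected subgraph $H$ of still-large minimum degree, and then greedily route the $\binom{t}{2}$ edge-disjoint paths required for a strong $K_t$-immersion. First I would apply Mader's theorem, which says that every graph of average degree $\geq 4k$ contains a $(k+1)$-edge-connected subgraph. Since $\delta(G) \geq 200t$, the average degree is at least $200t$, and iterating Mader's extraction carefully (pruning only vertices whose removal does not drop the current minimum degree below a chosen threshold) yields a subgraph $H \subseteq G$ that is $\Omega(t)$-edge-connected while still retaining $\delta(H) = \Omega(t)$. The simplicity of $G$ is crucial here: it guarantees that each vertex of $H$ has many \emph{distinct} neighbors rather than a large multiplicity of parallel edges, so local degree translates into genuine routing freedom.

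Next I would pick $t$ branch vertices $T = \{v_1, \ldots, v_t\} \subseteq V(H)$ arbitrarily, and process the $\binom{t}{2}$ pairs in some order. Maintaining the set $F$ of edges already consumed by previously chosen paths, for each pair $(v_i, v_j)$ I would work in the residual graph $H' = H - F - (T \setminus \{v_i, v_j\})$ and take a \emph{shortest} path $P_{ij}$ from $v_i$ to $v_j$ in $H'$. The shortest-path choice is what controls the overall edge budget: a diameter estimate in edge-connected graphs shows that $|F|$ stays bounded by a polynomial in $t$ throughout all $\binom{t}{2}$ iterations. Menger's theorem then furnishes each $P_{ij}$, provided $v_i$ and $v_j$ remain in the same component of $H'$ with positive $v_i$--$v_j$ edge-connectivity.

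The principal obstacle, and where the generous constant $200$ is spent, is bookkeeping the connectivity loss from the two kinds of deletions. Removing the $t-2$ other branch vertices can sever many edges globally, but because $H$ is simple, at most $t-2$ edges incident to any particular $v_i$ are lost this way. Similarly, because the strong-immersion condition forbids branch vertices as internal vertices of any $P_{ij}$, each earlier path contributes at most two edges at $v_i$, and only when $i$ is one of its endpoints. Hence the residual degree at each $v_i$ stays $\Omega(t)$ throughout, and by choosing the initial edge-connectivity of $H$ to exceed the cumulative edge-and-vertex deletion cost across all iterations, Menger guarantees the existence of $P_{ij}$ at each step.

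I expect the most delicate step to be uniformly bounding the lengths of the $P_{ij}$: without such a bound, $|F|$ can blow up and destroy the edge-connectivity of $H'$. Showing that shortest paths in $H'$ remain short at every iteration is the technical heart of the argument and is exactly what forces the factor $200$ in the minimum-degree hypothesis. A clean way to handle it is to maintain the invariant that $H'$ has diameter bounded by a function of $t$, using the fact that an $\Omega(t)$-edge-connected graph on $n$ vertices has diameter at most $O(n/t)$ and carefully arguing that the support of the obstruction to short paths is small; alternatively, one can use an ``undeletion'' / swapping argument that rewires existing paths to recover local edge-connectivity whenever it threatens to drop. Either route, once the bookkeeping is carried out, upgrades the pigeonhole slack in the $200t$ hypothesis into the existence of all $\binom{t}{2}$ paths simultaneously, producing the desired strong $K_t$-immersion.
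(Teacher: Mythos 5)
This statement is not proved in the paper at all: it is Theorem~7 of DeVos, Dvor\'ak, Fox, McDonald, Mohar, and Scheide, imported verbatim from \cite{DDFMMS}, so there is no in-paper proof to compare against. Judged on its own merits, your sketch has genuine gaps and would not survive being written out.

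The first fatal problem is the treatment of the branch vertices. To enforce the \emph{strong} immersion condition you delete $T\setminus\{v_i,v_j\}$ from the residual graph, and you account for this only locally (``at most $t-2$ edges incident to any particular $v_i$ are lost''). But edge-connectivity gives no protection against vertex deletion: a $k$-edge-connected graph minus a single vertex can be disconnected (two copies of $K_{k+1}$ glued at one vertex is $k$-edge-connected, yet removing the shared vertex disconnects it). So if the branch vertices are chosen arbitrarily, deleting the other $t-2$ of them can separate $v_i$ from $v_j$ outright, and no amount of initial edge-connectivity of $H$ repairs this. This is precisely why the actual argument in \cite{DDFMMS} must choose the branch vertices carefully (and is genuinely delicate); ``pick $t$ branch vertices arbitrarily'' cannot work as stated. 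The second problem is the edge budget. You claim a ``diameter estimate in edge-connected graphs shows that $|F|$ stays bounded by a polynomial in $t$,'' but the bound you quote is $O(n/t)$, which is unbounded in $t$ because $n$ is arbitrary: a long ``thickened path'' (consecutive groups of $\Theta(t)$ vertices, each group complete and fully joined to the next) is simple, has minimum degree $\ge 200t$, is $\Theta(t^2)$-edge-connected, and has diameter $\Theta(n/t)$. Shortest paths between far-apart branch vertices then consume unboundedly many edges, so the invariant ``$|F|$ polynomial in $t$'' fails, and with it the Menger step at later iterations. Both the selection of branch vertices and the control of cumulative edge usage are the actual content of the theorem, and neither is supplied by the sketch.
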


Trivially, this implies that every simple graph which is $200t$-edge connected contains a strong immersion of $K_t$.  However, if we remove the requirement that the graph be simple, the statement no longer holds.  In fact, the graphs $P_{k, n}$ form examples of graphs which can be arbitrarily highly edge connected and do not contain a strong immersion of $K_4$.  

In this section, we will see that long paths of parallel edges essentially form the only highly edge connected graphs which do not strongly immerse large cliques.  Before proceeding with the theorem, we will need several easy results. First, we show that even though $P_{k,n}$ does not contain a strong immersion of even $K_4$, if there is a single vertex that is adjacent to many vertices of $P_{k,n}$, then a strong immersion of $K_k$ appears.

\begin{LE}\label{lem:pathplusvertex}
Let $x_1$, $\dots$, $x_n$ be vertices in a graph $G$ such that there are at least $k$ edges between $x_i$ and $x_{i+1}$ for every $1\le i < n-1$. Let $y$ be a vertex having at least $k^2$ neighbors in $\{x_1,\dots, x_n\}$. Then $G$ contains a strong immersion of $K_k$. 
\end{LE}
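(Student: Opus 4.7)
The plan is to choose $k$ branch vertices for a strong $K_k$-immersion from among the neighbors of $y$ in $\{x_1,\ldots,x_n\}$, while using $y$ itself as an internal routing vertex that is \emph{not} a branch vertex. Using the hypothesis that $y$ has at least $k^2$ neighbors in $\{x_1,\ldots,x_n\}$, the first step is a pigeonhole choice of branch vertices $x_{b_1},\ldots,x_{b_k}$ (with $b_1<\cdots<b_k$) such that each interior segment $(b_l,b_{l+1})$ contains at least $k-2$ further neighbors of $y$, which we call the \emph{inner neighbors} of segment $l$. This is possible because $k+(k-1)(k-2)=k^2-2k+2\le k^2$.

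For each pair of branch vertices $x_{b_l},x_{b_m}$ with $l<m$, I would construct a composite path as follows. For consecutive pairs ($m=l+1$), use the direct spine path $x_{b_l},x_{b_l+1},\ldots,x_{b_{l+1}}$, consuming one of the $k$ parallel edges on each sub-segment $x_ix_{i+1}$. For non-consecutive pairs ($m\ge l+2$), choose an inner neighbor $y^{(l,m)}_1$ of segment $l$ and $y^{(l,m)}_2$ of segment $m-1$, and route
\[
x_{b_l}\leadsto y^{(l,m)}_1\to y\to y^{(l,m)}_2\leadsto x_{b_m},
\]
where $\leadsto$ denotes a spine sub-path. Assign these endpoints so that within each segment, the $k-l-1$ endpoints $y^{(l,\cdot)}_1$ occupy the leftmost inner-neighbor positions and the $l-1$ endpoints $y^{(\cdot,l+1)}_2$ occupy the rightmost; this uses exactly $(k-l-1)+(l-1)=k-2$ distinct inner neighbors per segment, matching what we reserved.

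Edge-disjointness at $y$ is immediate from the distinctness of these inner neighbors. On the spine, a direct count shows that, with the leftmost/rightmost layout, each sub-segment $x_ix_{i+1}$ inside segment $l$ is used by at most $\max(k-l,\,l)\le k-1$ composite paths, which fits in the $k$ available parallel edges. The strong-immersion condition then holds because every interior vertex of any composite path is either an unchosen spine vertex, an inner neighbor of $y$, or $y$ itself, none of which is a branch vertex---in particular, non-consecutive paths ``jump over'' the intermediate branch vertices through $y$.

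The main obstacle will be this spine capacity bookkeeping: verifying that the consecutive path, the forward-reaching non-consecutive paths starting at $b_l$, and the backward-reaching non-consecutive paths ending at $b_{l+1}$ can be simultaneously routed inside each segment within the $k$-edge capacity of every sub-segment $x_ix_{i+1}$---and this is exactly why the leftmost/rightmost placement of the inner-neighbor endpoints inside each segment is needed.
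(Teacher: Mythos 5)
Your construction is correct and is essentially the paper's argument: the key idea in both is that $y$ serves as a non-branch hub reachable by many edge-disjoint routes, each consisting of a spine sub-path to a distinct neighbor of $y$ followed by a jump to $y$. The paper merely packages this as a strong immersion of $S_{k,k}$ (with $y$ as the center and $k$ of its path-neighbors as the leaves) and cites the fact that $S_{k,k}$ strongly immerses $K_k$, leaving the spine-capacity bookkeeping as ``easily found,'' whereas you carry out that bookkeeping explicitly (and correctly: the $\max(k-l,l)\le k-1$ bound per sub-segment checks out).
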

\begin{proof}
A strong immersion of $S_{k,k}$ in $G$ can be easily found, where $y$ is the image of the central vertex of $S_{k,k}$ and the images of the other $k$ vertices are on the path. As $S_{k,k}$ contains $K_k$ as a strong immersion, the claim follows.
\end{proof}
We will need a result on graphs which do not contain $K_{1, \ell}$ as a minor.  There has been considerable work on finding extremal functions on the number of edges or number of vertices of degree at least 3 necessary to force such a minor.  See \cite{DS, KW, BZ}.  The next lemma follows as an easy corollary to a result of Zickfeld \cite{Z}, {Theorem 4.7}.  We include a self-contained proof for completeness.
Recall that if $G$ is a graph and $X \subseteq V(G)$, $N(X)$ denotes the set of vertices in $V(G) \setminus X$ with a neighbor in $X$.  

 \begin{LE}\label{lem:k1t}
   Let $G$ be a connected simple graph and let $\ell \ge 2$ be an integer.  Then
   either $G$ contains a $K_{1, \ell}$ minor or there exists a set $X
   \subseteq V(G)$ such that $|X| < 4\ell$ and the following holds.
   There are at most $2\ell$ components of $G-X$ and every component of
   $G-X$ is a path $P$.  Moreover, for every component $P$ of $G-X$,
   $N(X) \cap V(P)$ is a subset of the endpoints of $P$.
 \end{LE}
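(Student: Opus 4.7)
The plan is to take $X$ to be the set of branch vertices of $G$ (those with $\deg_G(v)\ge 3$), handling degenerate cases separately. If $G$ is a cycle, I choose $X=\{v\}$ for an arbitrary $v$, yielding a single path component whose endpoints are the two neighbours of $v$. If $G$ has no branch vertex and is not a cycle, then $G$ is itself a path and $X=\emptyset$ works. In the main case, every vertex of $V(G)\setminus X$ has $G$-degree at most $2$, so each component of $G-X$ has maximum degree $2$ and is therefore a path or a cycle; a cycle component would saturate the degrees of its vertices, making it an isolated component of $G$ and reducing to the already-handled cycle case. The condition $N(X)\cap V(P)\subseteq$ endpoints of $P$ is then immediate, since an internal vertex of a path component has $\deg_G=2$ with both neighbours inside the path.

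Next, I plan to bound the leaf set $L$ by $|L|\le \ell-1$ (modulo the trivial cases $|V(G)|\le 2$). If $|L|\ge\ell$, then $G-L$ is connected and non-empty (removing leaves never disconnects a connected graph), so contracting $G-L$ to a single vertex exhibits $K_{1,|L|}$, and hence $K_{1,\ell}$, as a minor of $G$.

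For the main quantitative bounds I pass to the multigraph $G^*$ obtained by suppressing every degree-$2$ vertex of $G$. Then $V(G^*)=X\cup L$, each vertex of $G^*$ inherits its degree from $G$ (hence at most $\ell-1$, else $K_{1,\ell}$ is already a subgraph), and $G^*$ is a minor of $G$ so also has no $K_{1,\ell}$ minor. Each component of $G-X$ corresponds to exactly one edge of $G^*$ arising from a degree-$2$ chain: pendant chains contribute $|L|$ components and ears between branch vertices contribute $e_2$ components, for a total of $|L|+e_2$. Euler's identity $\sum_v(\deg_{G^*}(v)-2)=2c^*-2$ applied to $G^*$, where $c^*=|E(G^*)|-|V(G^*)|+1$ is the cyclomatic number, together with $\deg_{G^*}(v)\ge 3$ on $X$ and $=1$ on $L$, yields $|X|\le 2c^*+|L|-2$.

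The principal obstacle is then to bound both $c^*$ and $e_2$ by $O(\ell)$; specifically, $c^*\le \tfrac{3\ell}{2}+1$ and $e_2\le\ell+1$ will suffice. The plan is a contrapositive argument: if either bound is violated, I will construct a connected subset $S\subseteq V(G^*)$ whose external neighbourhood $\partial_{G^*}(S)$ contains at least $\ell$ distinct vertices, and then realise $K_{1,\ell}$ as a minor of $G$ by contracting $S$ together with its associated degree-$2$ chains. The delicate point is that multi-edges and loops of $G^*$ can cause many $G^*$-edges leaving $S$ to land on the same outside vertex, so $S$ must be chosen greedily by starting from a vertex of high $G^*$-degree and extending along ears or tree-edges only when each extension contributes a new distinct external neighbour. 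Combining $|L|\le\ell-1$ with these bounds will then give $|X|<4\ell$ and $|L|+e_2\le 2\ell$, completing the proof.
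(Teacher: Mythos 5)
Your structural setup is fine: taking $X$ to be the set of branch vertices of $G$ does make every component of $G-X$ a path all of whose internal vertices have both neighbours on the path; the bound $|L|\le \ell-1$ via contracting the connected graph $G-L$ is correct; and the inequality $|X|\le 2c^*+|L|-2$ follows correctly from the handshake identity in the suppressed multigraph $G^*$. The gap is exactly where you locate it, and it is fatal as stated: the two bounds $c^*\le \tfrac{3\ell}{2}+1$ and $e_2\le \ell+1$ carry the entire quantitative content of the lemma, they are only sketched (``greedy'', ``delicate point''), and the second one is in fact false. Let $G$ be $K_5$ with every edge subdivided once and take $\ell=8$. A short case analysis on $|S\cap\{a,b,c,d,e\}|$ shows that every connected set $S$ in this graph has at most $7$ distinct external neighbours, so $G$ has no $K_{1,8}$ minor; yet $X$ consists of the five original vertices and $G-X$ consists of $10$ single-vertex ears, so $e_2=10>\ell+1=9$. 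Hence no construction of a connected set with many distinct neighbours can prove your bound, and your derivation of ``at most $2\ell$ components'' collapses (the conclusion $|L|+e_2=10\le 2\ell=16$ happens to hold here, but not by your route). More generally, the assertion that a connected $K_{1,\ell}$-minor-free graph has $O(\ell)$ branch vertices, cyclomatic number $O(\ell)$, and $O(\ell)$ maximal degree-two chains is precisely the hard core of the lemma --- essentially the Kleitman--West / Ding--Johnson--Seymour ``spanning trees with many leaves'' extremal results --- and you are proposing to reprove it from scratch without supplying the argument.

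The paper gets all of these bounds from a single device you are missing: a spanning tree $T$ of $G$ chosen to maximize the number of leaves. Its suppression $\bar T$ is a tree minor of $G$ with no degree-two vertices, so it has at most $\ell-1$ leaves and hence at most $2\ell$ vertices and edges; setting $X=Y\cup N_T(Y)$, where $Y$ is the set of branch vertices of $T$, immediately gives $|X|<4\ell$ and at most $2\ell$ components of $G-X$ (one per edge of $\bar T$), and the maximality of the number of leaves, via a one-step edge exchange, forces every internal vertex of every component of $G-X$ to have degree two in $G$. If you want to salvage your approach, you would need either to import such a max-leaf spanning tree argument to bound $c^*$ and the number of chains, or to cite the extremal results directly; the greedy neighbourhood-growing argument, even if completed, is aimed at a numerical bound that the subdivided $K_5$ shows cannot be true.
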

 
\begin{proof}
Assume $G$ does not contain a $K_{1,\ell}$ minor.  Pick a spanning tree $T$ of $G$ to maximize the number of leaves.  The tree $T$ is a subdivision of a tree $\bar{T}$ which has no vertex of degree two.  Note that $\bar{T}$ is a minor of $G$, and thus $\bar{T}$ has at most $\ell-1$ leaves.  Since every vertex of $\bar{T}$ which is not a leaf has degree at least three, we see that $|V(\bar{T})| \le 2\ell$, and at least half of the edges of $\bar{T}$ are incident to a leaf of $\bar{T}$.  

Let $Y$ be the set of vertices of degree at least three in $T$, and let $X = Y \cup N_T(Y)$.  We claim $X$ has at most 
$4\ell$ vertices.  For every edge $e \in E(\bar{T})$, there exists a corresponding path $P_e$ in $T$ whose ends have degree not equal to two (in $T$) such that every internal vertex of $P_e$ has degree exactly two in $T$.  Each such path $P_e$ can contain at most two vertices of $N_T(Y)$, and if it contains two such vertices, then it must be the case that both endpoints of $P_e$ are in $Y$.  Thus, given the bounds on $|E(\bar{T})|$ and the fact that at least half the edges of $\bar{T}$ are incident to leaves, we conclude that $|N_T(Y)| \le 3 \ell$.  The bound on $|X|$ now follows.

Assume, to reach a contradiction, that there exists an edge $e$ such that $P_e - X$ has an internal vertex $v$ of degree at least three in $G$.  Let $f$ be an edge of $E(G) \setminus E(T)$ incident to $v$.  If $f$ has both endpoints contained in $P_e$, it is easy to see that there exists an edge $e'$ of $P_e$ such that $(T \cup \{f\}) - e'$ has strictly more leaves.  Alternatively, $f$ has an endpoint not contained in $P_e$.  Then there exists a subpath $P'$ of $P_e$ with one end equal to $v$ and the other end contained in $Y$ such that for every edge $e'$ of $P'$, $(T \cup \{f\}) - e'$ is a tree.  Moreover, since $v$ is an internal vertex of $P_e - X$, we see that $P'$ must have length at least three.  It follows that the edge $e'$ can be chosen so that $(T \cup \{f\}) - e'$ has strictly more leaves that $T$, a contradiction.  We conclude that there are at most $2 \ell$ components of $G - X$, and each component is a path for which every internal vertex of the path has degree in $G$ equal to two, as desired.
\end{proof}

\begin{LE}\label{lem:contract}
Let $G$ and $H$ be graphs.  Let $J$ be a subgraph of $G$ which is $2|E(H)|$-edge connected.  Let $G'$ be the graph obtained from $G$ by contracting $V(J)$ to a single vertex.  If $G'$ contains a strong immersion of $H$, then $G$ contains a strong immersion of $H$.  
\end{LE}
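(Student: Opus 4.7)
The plan is to lift the strong immersion $\pi'$ of $H$ in $G'$ back to a strong immersion $\pi$ of $H$ in $G$ by replacing the contracted vertex $v$ (the image of $V(J)$) with suitable edge-disjoint paths inside $J$. Write $m := |E(H)|$, so $J$ is $2m$-edge-connected. Every edge of $G'$ incident to $v$ is naturally identified with an edge of $G$ in $\delta_G(V(J))$; call its unique endpoint in $V(J)$ the \emph{anchor} of that edge.

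I would distinguish two cases according to the role of $v$ in $\pi'$. In \emph{Case A}, $v$ is not a branch vertex of $\pi'$; then by strongness each composite path $\pi'(f)$ either avoids $v$ or crosses $v$ internally using exactly two edges of $\delta_G(V(J))$, with anchors $u_f, u'_f \in V(J)$. In \emph{Case B}, $v = \pi'(h)$ for some $h \in V(H)$; then $v$ appears only as an endpoint of the composite paths $\pi'(f)$ for $f$ incident to $h$, each such path using exactly one edge at $v$ with anchor $u_f \in V(J)$. Fix an arbitrary $v^* \in V(J)$, which in Case~B will serve as $\pi(h)$.

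The central step is to route, inside $J$, the paths that will stitch the composite paths back together. I would set up a single-sink multi-source max-flow problem in $J$ with sink $v^*$ and a unit-capacity source at each $u_f$ (Case~B) or at each $u_f$ and $u'_f$ (Case~A). The total demand is at most $\deg_H(h) \le m$ in Case~B and at most $2m$ in Case~A. Since $J$ is $2m$-edge-connected, every nontrivial edge cut of $J$ has size at least $2m$, so the $s$--$v^*$ min-cut in the augmented network equals the total demand, and max-flow--min-cut produces the required pairwise edge-disjoint paths in $J$. In Case~A, for each relevant $f$ I concatenate the $u_f$--$v^*$ path with the $v^*$--$u'_f$ path and extract from the resulting walk a simple $u_f$--$u'_f$ subpath $Q_f \subseteq J$; in Case~B I keep the $u_f$--$v^*$ paths $R_f$ as produced.

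Finally I would build $\pi$ by setting $\pi(h') := \pi'(h')$ for each $h' \in V(H)$ (with the override $\pi(h) := v^*$ in Case~B), and by setting $\pi(f) := \pi'(f)$ when $\pi'(f)$ avoids $v$, and otherwise splicing $Q_f$ into $\pi'(f)$ at the former $v$-location (Case~A) or prepending $R_f$ to $\pi'(f)$ (Case~B). Edge-disjointness across distinct $f$ is automatic, since the $\pi'(f)$'s use no edge of $J$ while the $Q_f$'s and $R_f$'s lie in $J$ and were chosen pairwise edge-disjoint. The main obstacle is verifying that $\pi$ is still \emph{strong}: in Case~A no branch vertex of $\pi$ lies in $V(J)$, so internal vertices of $Q_f$ inside $J$ cannot be branch vertices; in Case~B the only branch vertex in $V(J)$ is $v^*$, which appears only as an endpoint of each $R_f$ and is avoided entirely by the composite paths $\pi(f)$ for $f$ not incident to $h$ (because the corresponding $\pi'(f)$ did not meet $v$). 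This bookkeeping at $v^*$ is the delicate point of the argument.
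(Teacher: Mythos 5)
Your proof is correct and follows essentially the same route as the paper: both arguments rest on the observation that the strong immersion uses at most $2|E(H)|$ edges of $\delta_G(V(J))$, so the $2|E(H)|$-edge-connectivity of $J$ yields pairwise edge-disjoint paths inside $J$ from a single vertex $v^*$ to the anchors of those edges, which are then spliced into the composite paths. The paper states this very tersely (discard the unused edges at $v_J$, find a vertex of $V(J)$ with edge-disjoint paths to the remaining boundary edges, and invoke transitivity of strong immersion); your two-case analysis, the max-flow routing, and the strongness check at $v^*$ are a careful expansion of that same idea.
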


\begin{proof}
Let $v_J$ be the vertex of $V(G') \setminus V(G)$ corresponding to the contracted set $V(J)$.  Fix a strong immersion of $H$ in $G'$ given by the map $\pi$.  Let $\bar{G}$ be the graph obtained from $G'$ by deleting the edges incident to $v_J$ which are not used in the immersion, i.e., we delete the set of edges $X = \delta_{G'}(v_J) \setminus \bigcup_{f \in E(H)} E(\pi(f))$.  The graph $\bar{G}$ obviously still admits $H$ as a strong immersion.  Moreover, by the edge-connectivity of $J$, we observe that there exists a vertex of $V(J)$ with edge-disjoint paths in $G$ to $\delta_{\bar{G}}(v_J)$ in $G$ as $|\delta_{\bar{G}}(v_J)| \le 2|E(H)|$.  We conclude that $\bar{G}$ is contained as a strong immersion in $G$, completing the proof of the lemma.
\end{proof}

We now state the kind of decomposition we will encounter in the structure theorem for strong immersions. 

\begin{DEF}
Let $G$ be a graph on $n$ vertices.  A \emph{linear order} of $G$ is simply a labeling of the vertices $v_1, v_2, \dots, v_n$.  Fix a linear order $v_1, v_2, \dots, v_n$ of $G$.  For $2 \le i \le n-1$, let $T_i$ be the set of edges with one end in $\{v_1, \dots, v_{i-1}\}$ and the other end in $\{v_{i+1}, \dots, v_{n}\}$.  The \emph{hop-width} of the order $v_1, \dots, v_n$ is $\max_{2 \le i \le n-1} |T_i|$.  The \emph{hop-width} of $G$ is the minimum hop-width taken over all possible linear orders of $G$.
\end{DEF}
Our definition of hop-width is very similar to the definition of cut-width, but note that here edges incident to $v_i$ are not members of $T_i$.  Thus, the path $P_{k, t}$ has hop-width 0 but cut-width $k$.  As we have already observed, such paths with many parallel edges are a natural class of graphs which exclude strong immersions of bounded sized cliques.  This is the essential motivation for the definition of hop-width. In fact, one can see that if a highly edge connected graph has a linear order with bounded hop-width, then there should be many parallel edges between $v_i$ and $v_{i+1}$ for every $i$, that is, the graph is close to being a path with many parallel edges. 

We now state the structure theorem for highly edge connected graphs excluding a clique strong immersion. It shows that such graph can be built from a bounded number of parts having bounded hop-width; in a sense, this shows that paths with parallel edges are the canonical obstacles for clique strong immersions in highly edge connected graphs.

\begin{theorem}\label{thm:strong}  For all $t \ge 1$, and let $G$ be a $400t^5$-edge-connected graph with no strong $K_t$ immersion.  Then there exists a set $A \subseteq V(G)$ and $Z \subseteq E(G)$ such that 
\begin{enumerate}
\item $|A| \le 4t^2$, $|Z| \le 6t^{10}$, and
\item $(G-Z)-A$ has at most $2t^2$ distinct components. 
\end{enumerate}
Moreover, for every component $J$ of $(G-Z) - A$ with $n$ vertices, $J$ has a linear order $v_1, \dots, v_n$ satisfying the following:
\begin{enumerate}
\item[3.]  the only vertices of $J$ with a neighbor in $A$ (in $G-Z$) are $v_1$ and $v_n$, and 
\item[4.]  the linear order $v_1, \dots, v_n$ has hop-width at most $2t^6$ in $J$.
\end{enumerate}
\end{theorem}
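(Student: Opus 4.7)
The plan is to reduce the analysis to the underlying simple graph $\hat{G}$ of $G$ (one edge per parallel class) and then apply Lemma~\ref{lem:k1t}.  Since any strong $K_t$ immersion in $\hat{G}$ lifts immediately to one in $G$ (the same branch vertices and edge-disjoint paths still work), Theorem~\ref{thm:DDFMMS} applied to every induced subgraph of $\hat{G}$ forces $\hat{G}$ to have no subgraph of minimum degree at least $200t$; in particular, $\hat{G}$ is $(200t-1)$-degenerate, and it is connected because $G$ is.  Invoking Lemma~\ref{lem:k1t} on $\hat{G}$ with parameter $\ell = t^2$ then yields the dichotomy: either $\hat{G}$ admits $K_{1,t^2}$ as a minor, or there is a set $X\subseteq V(G)$ with $|X|<4t^2$ such that $\hat{G}-X$ has at most $2t^2$ components, each a path whose only vertices in $N_{\hat{G}}(X)$ are its two endpoints.

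The hard part will be ruling out the $K_{1,t^2}$-minor case using the $400t^5$-edge-connectivity of $G$.  The strategy is to lift such a minor into the hypothesis of Lemma~\ref{lem:pathplusvertex}.  Given branch sets $B_0,B_1,\dots,B_{t^2}$ forming the minor, I would first use Lemma~\ref{lem:contract} to contract each branch set whose internal edge-connectivity exceeds $2\binom{t}{2}=t(t-1)$ (which the global edge-connectivity of $G$ forces for appropriately chosen or enlarged branch sets), obtaining a simpler graph with no strong $K_t$ immersion in which a single vertex (coming from $B_0$) is adjacent to $t^2$ distinct vertices.  After contraction, the combination of the overall degree bound $\deg_G(\cdot)\ge 400t^5$ with the $(200t-1)$-degeneracy of $\hat{G}$ forces many parallel copies along the star, so a pigeonhole argument produces a thickened path of length depending on $t$ (with at least $t$ parallel edges between consecutive vertices) together with a vertex having $\ge t^2$ neighbors on this path; Lemma~\ref{lem:pathplusvertex} then extracts a strong $K_t$ immersion in $G$, contradicting the hypothesis.

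In the good case of the dichotomy we set $A=X$, which immediately gives $|A|<4t^2$ and at most $2t^2$ path components $P=v_1v_2\cdots v_n$ of $\hat{G}-A$ with only $v_1,v_n$ having neighbors in $A$ in $\hat{G}$ (hence in $G$).  Each such $P$ determines a component $J$ of $G-A$ whose $G$-edges are parallel copies of the edges of $P$ — no chord edges exist because $P$ is a path in the simple graph $\hat{G}$ — so the linear order $v_1,\dots,v_n$ has hop-width $0$ in $J$, well within the stated bound $2t^6$.  Finally, the set $Z$ is used to absorb any stray edges left over when translating the simple-graph decomposition back to $G$ (for instance, edges introduced during the branch-set inflation/contraction step, or extra incidences to $A$ beyond those at the path endpoints).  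A routine counting argument using the $(200t-1)$-degeneracy of $\hat{G}$, the bound $|A|\le 4t^2$, and the bound of $2t^2$ on the number of components yields $|Z|\le 6t^{10}$ with considerable slack.
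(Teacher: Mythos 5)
There is a genuine gap, and it sits exactly where you say the hard part is: the $K_{1,t^2}$-minor case for the underlying simple graph $\hat{G}$ cannot be refuted, because passing to $\hat{G}$ discards all multiplicity information. A $K_{1,t^2}$ minor in $\hat{G}$ only certifies $t^2$ disjoint connected sets each joined to a connected centre set $B_0$ by at least \emph{one} edge of $G$, and single edges are harmless: the theorem's own conclusion tolerates up to $6t^{10}$ completely unstructured ``thin'' edges (the set $Z$), and such edges can easily create large $K_{1,\ell}$ minors in $\hat{G}$ without creating strong clique immersions of comparable order (e.g.\ a thick path $P_{400t^5,n}$ with a sparse set of single chord edges added). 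The two specific steps you propose for closing this case are both unjustified. First, the global $400t^5$-edge-connectivity of $G$ does \emph{not} make the branch sets internally $2\binom{t}{2}$-edge-connected -- a branch set of a minor in $\hat G$ is merely connected and may be a single vertex or a path of single edges -- so Lemma \ref{lem:contract} is not applicable to it. Second, $(200t-1)$-degeneracy of $\hat{G}$ bounds the number of distinct neighbours of \emph{some} vertex in every subgraph, not of the star's centre; the centre may well have $t^2$ distinct neighbours each joined to it by a single edge, so no pigeonhole forces parallel copies along the star, and the hypotheses of Lemma \ref{lem:pathplusvertex} (a $t$-thick path plus a vertex with $t^2$ neighbours on it) never materialize. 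Your ``good case'' is essentially fine (there $Z=\emptyset$ suffices and the hop-width is $0$), but the proof does not close.

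The repair is to run the $K_{1,\ell}$ argument in an auxiliary graph whose edges already carry multiplicity. The paper joins $x$ and $y$ in a graph $R$ only when $G$ has at least $2t^2$ parallel $xy$-edges; then a connected subgraph of $R$ induces a $2t^2$-edge-connected subgraph of $G$, so a $K_{1,t}$ minor in $R$ genuinely yields $S_{t,t}$ via Lemma \ref{lem:contract} and hence a strong $K_t$ immersion, and Lemma \ref{lem:k1t} applied to $R$ (not to $\hat G$) produces the set $A$ and the path components. This choice creates the work you defer to ``a routine counting argument'': one must separately show that $R$ has few components covering all of $G$ (the paper's clump argument, which is where Theorem \ref{thm:DDFMMS} and the edge-connectivity of $G$ are actually used, applied to the graph obtained by contracting the thick pieces), and one must bound the thin edges between different paths, from $A$ to path interiors, and between far-apart vertices of the same path -- each via Lemma \ref{lem:pathplusvertex} -- before collecting them into $Z$. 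None of these bounds follows from degeneracy of $\hat{G}$.
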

\begin{proof}
The statement follows trivially for $t \le 2$.  Fix $t \ge 3$.  We define an auxiliary graph $R$ as follows.  Let $V(R) = V(G)$, and two vertices $x$ and $y$ in are adjacent in $R$ if and only if there exist at least $2t^2$ parallel edges of $G$ with ends $x$ and $y$.  We define a \emph{clump} to be a pair $(J, X)$ such that $J$ is an induced subgraph of $G$ and $X \subseteq V(J)$ with $X \neq \emptyset$ satisfying the following:
\begin{itemize}
\item[a.] If $|V(J)|\neq 1$, then $J$ is $2t^2$-edge connected;
\item[b.] Every component of $R$ is either contained in $V(J)$ or disjoint from $V(J)$;
\item[c.]  if $V(J) \setminus X \neq \emptyset$, then for every $x \in X$, there exist at least $2t^2$ distinct edges with one end equal to $x$ and other end in $V(J) \setminus X$; 
\item[d.] if $|V(J) \setminus X| \ge 2$, then $J-X$ is $2t^2$-edge connected and $|X| \ge 2$; 
\item[e.] if $V(J) \setminus X = \emptyset$, then $|V(J)| = |X| = 1$; 
\end{itemize}
Moreover, if we let $comp_R(J)$ be the number of components of $R$ contained in $V(J)$, then we have 
\begin{itemize}
\item[f.] if $|V(J)| \ge 3$, then $|X| \ge comp_R(J)+1$.
\end{itemize}

We first observe that for every component $R'$ of $R$, there exists a clump $(J, X)$ such that $V(J)= V(R')$.  The cases differ slightly, depending on $|V(R')|$, however, in each case we will fix $J = G[V(R')]$.  If $|V(R')| = 1$, then $J$ along with $X = V(R')$ satisfies the definition.  Similarly, if $|V(R')| = 2$, then let $X$ be an arbitrarily chosen vertex of $V(J)$ and $(J, X)$ satisfies the definition of a clump.  Finally, if $R'$ has at least three vertices, we consider a spanning tree of $R'$ and let $X$ be two leaves of the spanning tree.  Then $(J, X)$ again satisfies the definition.

Pick clumps $(J_i, X_i)$ for $1 \le i \le k$ for some positive $k$ such that $V(J_i)  \cap V(J_j)= \emptyset$ and $V(G) = \bigcup_1^k V(J_i)$.  Moreover, pick such clumps to minimize $k$.  By the previous paragraph, it is always possible to find such a collection of clumps.  

We first observe that there does not exist an index $i$ such that
$|X_i| \ge t$.  Otherwise, property c.~implies that after contracting
$V(J)-X$ to a single vertex, the contracted graph contains a strong
immersion of $S_{t, t}$ (defined in Section \ref{sec:bdeddeg}) and
then property d.~and Lemma \ref{lem:contract} imply that $J$ also contains
a strong immersion of $S_{t,t}$. Given that $S_{t,t}$ contains $K_t$
as a strong immersion, we conclude that $G$ contains a strong
immersion of $K_t$ as well, a contradiction.

\begin{Claim}\label{cl:2}
For distinct $i$ and $j$, there are at most $2t^4$ distinct edges with one end in $V(J_i)$ and one end in $V(J_j)$.
\end{Claim}
\begin{cproof}
Assume otherwise and that for $i, j$, $i \neq j$, we have $2t^4$ distinct edges with one end in $V(J_i)$ and one end in $V(J_j)$.  Given that $|X_i|$ and $|X_j|$ are both at most $t-1$, it follows that one of the following holds, up to swapping the indices $i$ and $j$: 
\begin{itemize}
\item[i.] there exists $x_i \in X_i$ and $x_j \in X_j$ with $2t^2$ parallel edges connecting them, or
\item[ii.] there exists $x_i \in X_i$ with at least $2t^2$ distinct edges to the set $V(J_j) \setminus X_j$, or
\item[iii.] there exist at least $2t^2$ distinct edges, each with one end in $V(J_i) \setminus X_i$ and one end in $V(J_j) \setminus X_j$.
\end{itemize}
It is easy to see that i.~cannot occur, as this would imply $x_i$ and $x_j$ are adjacent in $R$ and no component of $R$ has vertices in distinct clumps.  If iii.~occurs, then $\bar{J} = G[V(J_i) \cup V(J_j)]$ would form the clump $(\bar{J}, X_i \cup X_j)$, contrary to our choice to minimize the number of clumps covering $V(G)$.

Thus, we may assume that ii.~holds.  Note that $V(J_j) \setminus X_j$ cannot be a single vertex, lest there exist an edge of $R$ with ends in distinct clumps.  Thus, $|V(J_j) \setminus X_j| \ge 2$ and $|V(J_j)| \ge 4$.  Property f.~implies that $|X_j| \ge comp_R(J_j) + 1$.  We conclude that $(J_i \cup J_j, (X_i \cup X_j) - x_i)$ is a clump.  Note that f.~holds because $|(X_i \cup X_j) - x_i| = |X_i| + |X_j| -1 \ge comp_R(J_i) + comp_R(J_j) + 1 = comp_R(J_i \cup J_j) + 1$.  We conclude that if ii.~holds, we have a contradiction to our set of clumps to minimize $k$.   This completes the proof of the claim.
\end{cproof}

\begin{Claim}  $k = 1$ and we have exactly one clump.
\end{Claim}
\begin{cproof}
Assume $k \ge 2$.  Let $G_1$ be the graph obtained by contracting each of the vertex sets $J_i$ to a single vertex for all $1 \le i \le k$.  By property a.~and Lemma \ref{lem:contract}, we see that $G_1$ does not have a strong immersion of $K_t$.  Assume, to reach a contradiction, that $G_1$ has more than one vertex.  By Claim \ref{cl:2}, there are at most $2t^4$ parallel edges connecting any pair of vertices of $G_1$.  Let $G_2$ be the simple graph obtained from $G_1$ by deleting parallel edges.  In other words, $G_2$ is the simple graph with $V(G_2) = V(G_1)$ and two vertices of $G_2$ are adjacent if and only if they are adjacent in $G_1$.  By the edge connectivity of $G$, we see that $G_2$ has minimum degree $200t$.  But by Theorem \ref{thm:DDFMMS}, $G_2$ has a $K_t$ strong immersion, and consequently, $G_1$ does as well, a contradiction.
\end{cproof}

It follows that $R$ has at most $t$ components.  Note as well that $R$ does not contain a $K_{1,t}$ minor.  Otherwise, there would exist a $2t^2$-edge connected subgraph $J$ of $G$ and $t$ distinct vertices of $V(G) \setminus V(J)$ each with $2t^2$ edges to $V(J)$.  Thus, by Lemma \ref{lem:contract}, we conclude that $G$ would contain a strong immersion of $S_{t,t}$, a contradiction.  

By applying Lemma~\ref{lem:k1t} to each component of $R$, we see that
there exists a subset $A \subseteq V(R) = V(G)$ with $|A| \le 4t^2$
such that $R-A$ has at most $2t^2$ components, each of which is a
path.  Moreover, for every component $P$ of $R-A$, we have that
$N_R(A) \cap V(P)$ is a subset of the ends of $P$.

We fix the set $A$ for the remainder of the proof, and we let $k$ be a positive integer with the components of $R-A$ labeled $P_1, \dots, P_k$.  For each $P_i$, we let $n(i) = |V(P_i)|$ and let the vertices of $P_i$ be labeled $v_1^i, v_2^i, \dots, v_{n(i)}^i$.  Note $k \le 2t^2$.  

\begin{Claim}\label{cl:edgesbetweenpaths}
For all $1 \le i < j\le k$, there are at most $2t^6$ edges of $G$ with one end in $V(P_i)$ and one end in $V(P_j)$.
\end{Claim}
\begin{cproof}
  Assume, to reach a contradiction, that there are indices $i$ and $j$
  such that there exist at least $2t^6$ edges, each with one end in
  $P_i$ and one end in $P_j$.  Since no pair of vertices $x, y$ with
  $x \in V(P_i)$ and $y \in V(P_j)$ are adjacent in $R$, we know that
  there are at most $2t^2$ parallel edges from $x$ to $y$ in $G$.
  Thus, without loss of generality, we may assume that there are at
  least $t^2$ distinct vertices of $P_i$ which have a neighbor in
  $V(P_j)$.  By contracting $V(P_j)$ to a single vertex and applying
  Lemmas~\ref{lem:contract} and \ref{lem:pathplusvertex}, we conclude
  that $G$ contains a strong immersion of $K_t$, a contradiction.
\end{cproof}

We let $Z_1$ be the set of edges $e \in E(G)$ such that $e$ has ends in distinct paths $P_i$ and $P_j$.  By Claim~\ref{cl:edgesbetweenpaths} and the bound on $k$, we have that $|Z_1| \le 4t^{10}$.  

\begin{Claim}
Let $1 \le i \le k$.  Let $x, y$ be the endpoints of $P_i$.  Then there are at most $8t^6$ edges of $G$ with one end in $V(P) \setminus \{x, y\}$ and one in $A$.
\end{Claim}

\begin{cproof}
Assume, to reach a contradiction, that there exists an index $i$ such that if we let $x$ and $y$ be the endpoints of $P_i$, there are at least $8t^6$ edges of $G$ with one end in $A$ and the other end in $V(P_i) \setminus \{x, y\}$.  By the way $A$ was defined using Lemma \ref{lem:k1t}, there does not exist an edge of $R$ with one end in $A$ and one end in $V(P_i) \setminus \{x, y\}$.  Thus, for every pair of vertices $u \in A$ and $v \in V(P_i) \setminus\{x, y\}$, there are at most $2t^2$ parallel edges of $G$ from $u$ to $v$.  By assumption, there exist at least $4t^4 \ge t^2|A|$ distinct internal vertices of $P_i$ which have a neighbor in $A$.  Thus, there exists a vertex $v \in A$ and $t^2$ distinct internal vertices of $P_i$ which are each adjacent to $v$.  By Lemma~\ref{lem:pathplusvertex}, we conclude that $G$ contains a strong immersion of $K_t$, a contradiction.
\end{cproof}

Let $Z_2 \subseteq E(G)$ be given by $Z_2 = \{e \in E(G): \text{$\exists i$ such that $e \cap V(P_i) \setminus \{v_1^i, v_{n(i)}^i\} \neq \emptyset$ and $e \cap A \neq \emptyset$}\}$.  In other words, $Z_2$ is the set of edges with one end in $A$ and one end contained as an internal vertex of some $P_i$.  By the previous claim, $|Z_2| \le 8t^6k \le 16t^8$.  

We let $Z = Z_1 \cup Z_2$, and observe that $|Z| \le 6t^{10}$ by the assumption that $t \ge 3$.  By construction, the components of $(G-Z) - A$ are exactly the subgraphs of $G$ induced by $V(P_i)$ for $1 \le i \le k$.  Thus, we see that $A$ and $Z$ satisfy 1 and 2 in the statement of the theorem.  Moreover, by the fact that $Z$ contains every edge with one end in $A$ and one end in an internal vertex of $P_i$, we see that $A$ and $Z$ along with the linear order $v_1^i, \dots, v_{n(i)}^i$ of $V(P_i)$ satisfy 3.  Thus, to the complete the proof of the theorem, it suffices to show the following claim.

\begin{Claim}
For every $1 \le i \le k$, the linear order $v_1^i, \dots, v_{n(i)}^i$ of $G[V(P_i)]$ has hop-width at most $2t^6$.
\end{Claim}
\begin{cproof}
Assume the claim is false, and that there exists an index $a$ such that there are at least $2t^6$ edges of $G$ with one end in $\{v_1^i, \dots, v_{a-1}^i\}$ and one end in $\{v_{a+1}^i, \dots, v_{n(i)}^i\}$.  Given that $v_1^i, \dots, v_{n(i)}^i$ form an induced path in $R$, for all pairs of vertices $u, v$ with $u \in \{v_1^i, \dots, v_{a-1}^i\}$ and $v \in \{v_{a+1}^i, \dots, v_{n(i)}^i\}$, there are at most $2t^2$ parallel edges of $G$ with ends $u$ and $v$.  Thus, without loss of generality, there exist at least $t^2$ distinct vertices of $\{v_1^i, \dots, v_{a-1}^i\}$ that are adjacent to a vertex of $\{v_{a+1}^i, \dots, v_{n(i)}^i\}$. 
By contracting $\{v_{a+1}^i, \dots, v_{n(i)}^i\}$ (Lemma~\ref{lem:contract}) and invoking Lemma~\ref{lem:pathplusvertex}, we conclude that $G$ contains a strong immersion of $K_t$, a contradiction.  
\end{cproof}

This final claim shows that conclusion 4.~in the statement of the theorem holds, completing the proof.
\end{proof}

We conclude with the observation that if a graph $G$ has subsets $A$ and $Z$ satisfying 1-4 in the statement of Theorem \ref{thm:strong} for some value $t$, then the graph $G$ does not have a strong immersion of $K_{10t^{10}}$, indicating that Theorem \ref{thm:strong} is a good characterization of graphs excluding a strong immersion of a clique.

\begin{center}
{\bf ACKNOWLEDGEMENTS}
\end{center}

The authors gratefully acknowledges Paul Seymour's valuable contributions at the early stages of this work, and specifically his helpful observations on how to use the line graph and tangles in the proof of Theorem \ref{thm:main}.  Thanks as well to Chun-Hung Liu for pointing out an error in the proof of Theorem \ref{thm:imp2} in an earlier version of this article.

\end{document}